\newtheorem{theorem}{Theorem}[section]
\newtheorem{lemma}[theorem]{Lemma}
\newtheorem{proposition}[theorem]{Proposition}
\theoremstyle{definition}
\newtheorem{definition}[theorem]{Definition}
\newtheorem{remark}[theorem]{Remark}
\definecolor{A}{rgb}{.75,1,.75}
\numberwithin{equation}{section}
\begin{document}

\title[calibrated representations]{Calibrated representations of affine Yokonuma-Hecke algebras}
\author[Weideng Cui]{Weideng Cui}
\address{School of Mathematics, Shandong University, Jinan, Shandong 250100, P.R. China.}
\email{cwdeng@amss.ac.cn}

\begin{abstract}
Inspired by the work [Ra1], we directly give a complete classification of irreducible calibrated representations of affine Yokonuma-Hecke algebras $\widehat{Y}_{r,n}(q)$ over $\mathbb{C},$ which are indexed by $r$-tuples of placed skew shapes. We then develop several applications of this result. In the appendix, inspired by [Ru], we classify and construct irreducible completely splittable representations of degenerate affine Yokonuma-Hecke algebras $D_{r,n}$ and the wreath product $(\mathbb{Z}/r\mathbb{Z})\wr \mathfrak{S}_{n}$ over an algebraically closed field of characteristic $p> 0$ such that $p$ does not divide $r$.
\end{abstract}



\maketitle
\medskip
\section{Introduction}
\subsection{}
Motivated by [Ma] on the dimension of the irreducible $\mathbb{F}\mathfrak{S}_{n}$-modules over an algebraically closed field $\mathbb{F}$ of characteristic $p$, Kleshchev [Kle] studied a class of $\mathbb{F}\mathfrak{S}_{n}$-modules, called completely splittable, on which the Jucys-Murphy elements act semisimply. Ruff [Ru] classified and constructed the irreducible completely splittable representations of degenerate affine Hecke algebras, also recovering Kleshchev's work. Later on, Wan [Wa] classified and constructed irreducible completely splittable representations of affine and finite Hecke-Clifford algebras over an algebraically closed field of characteristic not equal to 2.

Similar objects have arisen in many other contexts and might be used in different terminology, for example Gelfand-Zetlin [Ch, OV], seminormal [Mat], homogeneous [KleRa]. In particular, Ram [Ra1] generalized the classical construction of A. Young to classify and construct all finite dimensional irreducible calibrated representations of affine Hecke algebras of type $A$, and generalized the construction to other types in [Ra2]. This is the class of representations of affine Hecke algebras for which there is a good theory of Young tableaux.

\subsection{}
Yokonuma-Hecke algebras were introduced by Yokonuma [Yo] as a centralizer algebra associated to the permutation representation of a finite Chevalley group $G$ with respect to a maximal unipotent subgroup of $G$. The Yokonuma-Hecke algebra $\mathrm{Y}_{r,n}(q)$ (of type $A$) is a quotient of the group algebra of the modular framed braid group $(\mathbb{Z}/r\mathbb{Z})\wr B_{n},$ where $B_{n}$ is the braid group on $n$ strands (of type $A$). By the presentation given by Juyumaya and Kannan [Ju1, Ju2, JuK], the Yokonuma-Hecke algebra $\mathrm{Y}_{r,n}(q)$ can also be regraded as a deformation of the group algebra of the complex reflection group $G(r,1,n),$ which is isomorphic to the wreath product $(\mathbb{Z}/r\mathbb{Z})\wr \mathfrak{S}_{n}$.

Recently, by generalizing the approach of Okounkov-Vershik \cite{OV} on the representation theory of $\mathfrak{S}_n$, Chlouveraki and Poulain d'Andecy [ChPA1] introduced the notion of the affine Yokonuma-Hecke algebra $\widehat{\mathrm{Y}}_{r,n}(q)$ and gave explicit formulae for all irreducible representations of $\mathrm{Y}_{r,n}(q)$ over $\mathbb{C}(q)$, and obtained a semisimplicity criterion for it. In their subsequent paper [ChPA2], they studied the representation theory of the affine Yokonuma-Hecke algebra $\widehat{\mathrm{Y}}_{r,n}(q)$ and the cyclotomic Yokonuma-Hecke algebra $\mathrm{Y}_{r,n}^{d}(q)$. In particular, they gave the classification of irreducible representations of $\mathrm{Y}_{r,n}^{d}(q)$ in the generic semisimple case. We [CWa] gave the classification of the simple $\widehat{\mathrm{Y}}_{r,n}(q)$-modules as well as the classification of the simple modules of the cyclotomic Yokonuma-Hecke algebras over an algebraically closed field $\mathbb{K}$ of characteristic $p$ such that $p$ does not divide $r.$ Rostam [Ro] proved that the cyclotomic Yokonuma-Hecke algebra is a particular case of cyclotomic quiver Hecke algebras. In the past several years, the study of affine and cyclotomic Yokonuma-Hecke algebras has made substantial progress; see [ChPA1-2, ChS, C1-4, CWa, ER, JaPA, Lu, PA2, Ro].

\subsection{} Inspired by the work [Ra1], in this paper we classify and construct irreducible calibrated representations of affine Yokonuma-Hecke algebras $\widehat{Y}_{r,n}(q)$ over $\mathbb{C},$ which are indexed by $r$-tuples of placed skew shapes. We then develop several applications of this result. In the appendix, inspired by [Ru], we give a complete classification of irreducible completely splittable representations of degenerate affine Yokonuma-Hecke algebras $D_{r,n}$ and the wreath product $(\mathbb{Z}/r\mathbb{Z})\wr \mathfrak{S}_{n}$ over an algebraically closed field of positive characteristic $p$ such that $p$ does not divide $r$.

This paper is organized as follows. In Section 2, we recall some definitions and properties. In Section 3, we present a complete classification and construction of irreducible calibrated representations of $\widehat{Y}_{r,n}(q)$ over $\mathbb{C}.$ In Section 4, we develop several applications of this classification result. In Section 5 (Appendix), we classify and construct the irreducible completely splittable representations of degenerate affine Yokonuma-Hecke algebras $D_{r,n}$ and the wreath product $(\mathbb{Z}/r\mathbb{Z})\wr \mathfrak{S}_{n}$ over an algebraically closed field of positive characteristic $p$ such that $p$ does not divide $r$.

\section{The definition and properties of affine Yokonuma-Hecke algebras}
\subsection{The definition of $\widehat{Y}_{r,n}(q)$}
Fix an element $q\in \mathbb{C}^{*}$ which is not a root of unity.

\begin{definition}
The affine Yokonuma-Hecke algebra, denoted by $\widehat{Y}_{r,n}=\widehat{Y}_{r,n}(q)$, is an $\mathbb{C}$-associative algebra generated by the elements $t_{1},\ldots,t_{n},g_{1},\ldots,g_{n-1},X_{1}^{\pm1},$ in which the generators $t_{1},\ldots,t_{n},g_{1},$ $\ldots,g_{n-1}$ satisfy the following relations:
\begin{equation}\label{rel-def-Y1}\begin{array}{rclcl}
g_ig_j\hspace*{-7pt}&=&\hspace*{-7pt}g_jg_i && \mbox{for all $i,j=1,\ldots,n-1$ such that $\vert i-j\vert \geq 2$,}\\[0.1em]
g_ig_{i+1}g_i\hspace*{-7pt}&=&\hspace*{-7pt}g_{i+1}g_ig_{i+1} && \mbox{for all $i=1,\ldots,n-2$,}\\[0.1em]
t_it_j\hspace*{-7pt}&=&\hspace*{-7pt}t_jt_i &&  \mbox{for all $i,j=1,\ldots,n$,}\\[0.1em]
g_it_j\hspace*{-7pt}&=&\hspace*{-7pt}t_{s_i(j)}g_i && \mbox{for all $i=1,\ldots,n-1$ and $j=1,\ldots,n$,}\\[0.1em]
t_i^r\hspace*{-7pt}&=&\hspace*{-7pt}1 && \mbox{for all $i=1,\ldots,n$,}\\[0.2em]
g_{i}^{2}\hspace*{-7pt}&=&\hspace*{-7pt}1+(q-q^{-1})e_{i}g_{i} && \mbox{for all $i=1,\ldots,n-1$,}
\end{array}
\end{equation}
where $s_{i}$ is the transposition $(i,i+1)$ in the symmetric group $\mathfrak{S}_n$ on $n$ letters, and for each $1\leq i\leq n-1$,
$$e_{i} :=\frac{1}{r}\sum\limits_{s=0}^{r-1}t_{i}^{s}t_{i+1}^{-s},$$
together with the following relations concerning the generators $X_{1}^{\pm1}$:
\begin{equation}\label{rel-def-Y2}\begin{array}{rclcl}
X_{1}X_{1}^{-1}\hspace*{-7pt}&=&\hspace*{-7pt}X_{1}^{-1}X_{1}=1,\\[0.1em]
g_{1}X_{1}g_{1}X_{1}\hspace*{-7pt}&=&\hspace*{-7pt}X_{1}g_{1}X_{1}g_{1}, \\[0.1em]
g_{i}X_{1}\hspace*{-7pt}&=&\hspace*{-7pt}X_{1}g_{i} &&  \mbox{for all $i=2,\ldots,n-1$,}\\[0.1em]
t_{j}X_{1}\hspace*{-7pt}&=&\hspace*{-7pt}X_{1}t_{j} && \mbox{for all $j=1,\ldots,n$,}
\end{array}
\end{equation}
\end{definition}

Note that the elements $e_{i}$ are idempotents in $\widehat{Y}_{r,n}$. The elements $g_{i}$ are invertible, with the inverse given by
\begin{equation}\label{inverse}
g_{i}^{-1}=g_{i}-(q-q^{-1})e_{i}\quad\mbox{for~all}~i=1,\ldots,n-1.
\end{equation}

Let $w\in \mathfrak{S}_{n},$ and let $w=s_{i_1}\cdots s_{i_{r}}$ be a reduced expression of $w.$ By Matsumoto's lemma, the element $g_{w} :=g_{i_1}g_{i_2}\cdots g_{i_{r}}$ does not depend on the choice of the reduced expression of $w$.


Let $i, k\in \{1,2,\ldots,n\}$ and set
\begin{equation}
e_{i,k} :=\frac{1}{r}\sum\limits_{s=0}^{r-1}t_{i}^{s}t_{k}^{-s}.
\end{equation}
Note that $e_{i,i}=1,$ $e_{i,k}=e_{k,i},$ and that $e_{i,i+1}=e_{i}.$ It can be easily checked that the following holds:
\begin{equation}\label{egge}
e_{j,k}g_{i}=g_{i}e_{s_{i}(j),s_{i}(k)}\quad\mbox{for $i=1,\ldots,n-1$ and $j,k=1,\ldots,n$}.
\end{equation}
In particular, we have $e_{i}g_{i}=g_{i}e_{i}$ for all $i=1,\ldots,n-1.$

We define inductively elements $X_{2},\ldots,X_{n}$ in $\widehat{Y}_{r,n}$ by
\begin{equation}
X_{i+1} :=g_{i}X_{i}g_{i}\quad\mathrm{for}~i=1,\ldots,n-1.\label{X2n}
\end{equation}
Then it is proved in [ChPA1, Lemma 1] that we have, for any $1\leq i\leq n-1$,
\begin{equation}
g_{i}X_{j}=X_{j}g_{i}\quad\mathrm{for}~j=1,2,\ldots,n~\mathrm{such~that}~j\neq i, i+1.\label{giXj}
\end{equation}
Moreover, by [ChPA1, Proposition 1], we have that the elements $t_{1},\ldots, t_{n}, X_{1},\ldots, X_{n}$ form a commutative family, that is,
\begin{equation}
xy=yx\quad\mathrm{for~any}~x,y\in \{t_{1},\ldots, t_{n}, X_{1},\ldots, X_{n}\}.\label{xyyx}
\end{equation}
We shall often use the following identities (see [ChPA2, Lemma 2.3]): for $1\leq i\leq n-1$,
\begin{equation}\label{gxxg}\begin{array}{rclcl}
g_{i}X_{i}\hspace*{-7pt}&=&\hspace*{-7pt}X_{i+1}g_{i}-(q-q^{-1})e_{i}X_{i+1}, \\[0.3em]
g_{i}X_{i+1}\hspace*{-7pt}&=&\hspace*{-7pt}X_{i}g_{i}+(q-q^{-1})e_{i}X_{i+1}, \\[0.3em]
g_{i}X_{i}^{-1}\hspace*{-7pt}&=&\hspace*{-7pt}X_{i+1}^{-1}g_{i}+(q-q^{-1})e_{i}X_{i}^{-1}, \\[0.3em]
g_{i}X_{i+1}^{-1}\hspace*{-7pt}&=&\hspace*{-7pt}X_{i}^{-1}g_{i}-(q-q^{-1})e_{i}X_{i}^{-1}.
\end{array}
\end{equation}

\subsection{Combinatorics}
$\lambda=(\lambda_{1},\ldots,\lambda_{k})$ is called a partition of $n$ if it is a finite sequence of non-increasing nonnegative integers whose sum is $n.$ We write $\lambda\vdash n$ if $\lambda$ is a partition of $n,$ and we set $|\lambda| :=n$. We shall identify a partition $\lambda$ with a Young diagram, which is the set $$[\lambda] :=\{(i,j)\:|\:i\geq 1~\mathrm{and}~1\leq j\leq \lambda_{i}\}.$$ We shall regard $\lambda$ as a left-justified array of boxes such that there exist $\lambda_{j}$ boxes in the $j$-th row for $j=1,\ldots,k.$

An $r$-partition of $n$ is an ordered $r$-tuple $\bm{\lambda}=(\lambda^{(1)},\lambda^{(2)},\ldots,\lambda^{(r)})$ of partitions $\lambda^{(k)}$ such that $\sum_{k=1}^{r}|\lambda^{(k)}|=n.$ If $\bm{\lambda}$ and $\bm{\mu}$ are two $r$-partitions such that $\mu_{i}^{(s)}\leq \lambda_{i}^{(s)}$ for all $i$ and $1\leq s\leq r,$ we write $\bm{\mu}\subseteq\bm{\lambda}.$ The $r$-skew shape $\bm{\lambda}/\bm{\mu}$ consists of all boxes of $\bm{\lambda}$ which are not in $\bm{\mu}$. The $\lambda^{(i)}/\mu^{(i)}$ ($1\leq i\leq r$) is called the component of $\bm{\lambda}/\bm{\mu}$, which is a union of connected components. We shall label the boxes of an $r$-skew shape $\bm{\lambda}/\bm{\mu}$ along major diagonals from southwest to northeast of the first component, and then along major diagonals from southwest to northeast of the second component, and so on. If a box contains the number $i$, we denote it by $\mathrm{box}_{i}.$ Let $\mathcal{S}_{r,n}$ denote the set of $r$-skew shapes with $n$ boxes.

Let $\bm{\lambda}/\bm{\mu}$ be an $r$-skew shape with $n$ boxes. A standard tableau of shape $\bm{\lambda}/\bm{\mu}$ is a labelling of the boxes in $\bm{\lambda}/\bm{\mu}$ with the numbers $1,2,\ldots,n$ such that the numbers strictly increase from left to right along each row and from top to bottom down each column. Let $\mathrm{Std}(\bm{\lambda}/\bm{\mu})$ denote the set of standard tableaux of shape $\bm{\lambda}/\bm{\mu}.$

For each $\bm{\lambda}/\bm{\mu}\in \mathcal{S}_{r,n},$ a row reading tableau $\mathcal{R}$ of shape $\bm{\lambda}/\bm{\mu}$ is the standard tableau where $1,2,\ldots,n$ appear from left to right across the rows of the first component, beginning with its northeast most connected component, and then across the rows of the second component, also beginning with its northeast most connected component, and so on. A column reading tableau $\mathcal{C}$ of shape $\bm{\lambda}/\bm{\mu}$ is the standard tableau where $1,2,\ldots,n$ appear from left to right across the columns of the first component, beginning with its southwest most connected component, and then across the rows of the second component, also beginning with its southwest most connected component, and so on.

The following lemma can be regarded as a generalization of [BW, Theorem 7.1] and can be proved similarly.
\begin{lemma}\label{lemma-bijections}
For each $\mathcal{T}\in \mathrm{Std}(\bm{\lambda}/\bm{\mu}),$ we denote by $\mathcal{T}(\mathrm{box}_{i})$ the element containing in $\mathrm{box}_{i}$ of $\mathcal{T}.$ Let $n_{i}=|\lambda^{(i)}|-|\mu^{(i)}|$ for each $1\leq i\leq r.$ We define a subgroup $w_{\mathcal{T}} :=w_{\mathcal{T}_{1}}\times w_{\mathcal{T}_{2}}\times\cdots\times w_{\mathcal{T}_{r}},$ where
\[
w_{\mathcal{T}_{i}}=\left(
  \begin{array}{cccc}
    n_1+\cdots+n_{i-1}+1 & n_1+\cdots+n_{i-1}+2 & \cdots & n_1+\cdots+n_{i-1}+n_{i}\\
    \mathcal{T}(\mathrm{box}_{(n_1+\cdots+n_{i-1}+1)}) & \mathcal{T}(\mathrm{box}_{(n_1+\cdots+n_{i-1}+2)}) &\cdots & \mathcal{T}(\mathrm{box}_{(n_1+\cdots+n_{i-1}+n_{i})})
\end{array}
\right).
\]
Let $\mathcal{C}$ and $\mathcal{R}$ be the column reading and row reading tableaux of shape $\bm{\lambda}/\bm{\mu}.$ The map
\[\mathrm{Std}(\bm{\lambda}/\bm{\mu})\rightarrow \mathfrak{S}_{n}, \qquad \mathcal{T}\mapsto w_{\mathcal{T}}\]
defines a bijection between $\mathrm{Std}(\bm{\lambda}/\bm{\mu})$ and the interval $[w_{\mathcal{C}}, w_{\mathcal{R}}]$ in $\mathfrak{S}_{n}$ (in the Bruhat ordr).
\end{lemma}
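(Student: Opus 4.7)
My approach is to reduce to the single skew shape case established as [BW, Theorem 7.1] by exploiting the product structure inherent in an $r$-skew shape. The first observation is that the two-line presentation of $w_{\mathcal{T}}$ implicitly requires $\mathcal{T}$ to place the consecutive block of integers $\{n_1+\cdots+n_{i-1}+1,\ldots,n_1+\cdots+n_{i-1}+n_i\}$ into the $i$-th component (otherwise $w_{\mathcal{T}}$ would not even be a permutation); under this restriction, $\mathcal{T}$ is equivalent to an $r$-tuple $(\mathcal{T}_1,\ldots,\mathcal{T}_r)$ where $\mathcal{T}_i\in\mathrm{Std}(\lambda^{(i)}/\mu^{(i)})$ after relabelling its entries to $1,\ldots,n_i$. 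This yields a natural bijection
\[
\mathrm{Std}(\bm{\lambda}/\bm{\mu}) \;\longleftrightarrow\; \prod_{i=1}^{r}\mathrm{Std}(\lambda^{(i)}/\mu^{(i)}),\qquad \mathcal{T}\mapsto (\mathcal{T}_1,\ldots,\mathcal{T}_r),
\]
which transports $w_{\mathcal{T}}$ to the tuple $(w_{\mathcal{T}_1},\ldots,w_{\mathcal{T}_r})$ inside the Young subgroup $W_0 := \mathfrak{S}_{n_1}\times\cdots\times\mathfrak{S}_{n_r}\subseteq\mathfrak{S}_n$.

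I would then invoke [BW, Theorem 7.1] componentwise: for each single skew shape $\lambda^{(i)}/\mu^{(i)}$ the map $\mathcal{T}_i\mapsto w_{\mathcal{T}_i}$ is a bijection between $\mathrm{Std}(\lambda^{(i)}/\mu^{(i)})$ and the Bruhat interval $[w_{\mathcal{C}_i},w_{\mathcal{R}_i}]$ in the symmetric group on the $i$-th index block, with $\mathcal{C}_i$ and $\mathcal{R}_i$ the column- and row-reading tableaux of $\lambda^{(i)}/\mu^{(i)}$. Taking the product of these $r$ bijections then identifies $\mathrm{Std}(\bm{\lambda}/\bm{\mu})$ with $\prod_{i=1}^{r}[w_{\mathcal{C}_i},w_{\mathcal{R}_i}]\subseteq W_0$.

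To finish, I would identify this product of intervals with the ambient Bruhat interval $[w_{\mathcal{C}},w_{\mathcal{R}}]$ in $\mathfrak{S}_n$. By construction of the reading tableaux, $w_{\mathcal{C}} = w_{\mathcal{C}_1}\cdots w_{\mathcal{C}_r}$ and $w_{\mathcal{R}} = w_{\mathcal{R}_1}\cdots w_{\mathcal{R}_r}$ both lie in $W_0$. Two standard properties of Bruhat order then close the argument: (i) by the subword criterion, for any $v\in W_0$ every $w\leq v$ in $\mathfrak{S}_n$ also lies in $W_0$, so $[w_{\mathcal{C}},w_{\mathcal{R}}]_{\mathfrak{S}_n}\subseteq W_0$; and (ii) the Bruhat order on the Coxeter direct product $W_0$ is componentwise, giving $[w_{\mathcal{C}},w_{\mathcal{R}}]_{W_0}=\prod_i[w_{\mathcal{C}_i},w_{\mathcal{R}_i}]$. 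I expect the main obstacle to be precisely this final identification---confirming that the ambient interval does not escape $W_0$ and that inside $W_0$ it splits as a direct product of the componentwise intervals---since everything else is either a direct appeal to [BW, Theorem 7.1] or routine bookkeeping for the diagonal-block indexing.
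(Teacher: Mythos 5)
Your opening reduction rests on a misreading of the definitions, and everything downstream inherits the error. By the paper's definition, a standard tableau of an $r$-skew shape $\bm{\lambda}/\bm{\mu}$ is \emph{any} labelling of the $n$ boxes by $1,\ldots,n$ increasing along rows and down columns; there is no requirement that the $i$-th component receive the consecutive block $\{n_1+\cdots+n_{i-1}+1,\ldots,n_1+\cdots+n_i\}$. Nor does $w_{\mathcal{T}}$ fail to be a permutation without that requirement: $w_{\mathcal{T}}$ is simply the map $k\mapsto\mathcal{T}(\mathrm{box}_k)$ written in $r$ pieces, and the symbol $\times$ denotes concatenation of partial bijections whose images partition $\{1,\ldots,n\}$, not a product inside the Young subgroup $W_0=\mathfrak{S}_{n_1}\times\cdots\times\mathfrak{S}_{n_r}$. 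Already for $r=2$ with each component a single box there are two standard tableaux, only one of which is a ``block'' tableau. Your identification $\mathrm{Std}(\bm{\lambda}/\bm{\mu})\leftrightarrow\prod_i\mathrm{Std}(\lambda^{(i)}/\mu^{(i)})$ therefore discards every tableau that interleaves entries among components --- only a $1/\binom{n}{n_1,\ldots,n_r}$ fraction of $\mathrm{Std}(\bm{\lambda}/\bm{\mu})$ survives --- and your final bijection lands on $\prod_i[w_{\mathcal{C}_i},w_{\mathcal{R}_i}]$, which is at best the part of the claimed set lying in $W_0$.

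The interleaving tableaux are precisely the point of the lemma: in the proof of Theorem \ref{classifi-maintheorem}, $g_i$ acts by $w_{\mathcal{T}}\mapsto w_{s_i\mathcal{T}}$ when $i$ and $i+1$ sit in different components, and the irreducibility argument needs chains of standard tableaux passing through such inter-component swaps; your restricted bijection cannot produce them. The intended route is not a reduction to $W_0$ but a direct rerun of the argument of [BW, Theorem 7.1], which already handles skew shapes with several connected components (read in opposite orders by $\mathcal{R}$ and $\mathcal{C}$, so that the interval $[w_{\mathcal{C}},w_{\mathcal{R}}]$ captures exactly the inter-component moves); here the $r$ components play the role of additional connected components. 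Note finally that your own observation (i) --- the Bruhat interval below an element of a standard parabolic stays in that parabolic --- cuts against the statement rather than for it: if $w_{\mathcal{C}}$ and $w_{\mathcal{R}}$ both lay in $W_0$, the interval could never reach the interleaving tableaux, so the orderings of components built into the reading tableaux are essential and must be checked carefully rather than treated componentwise.
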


Next we give the definition of placed $r$-skew shapes, generalizing the constructions in [Ra1]. Set $\mathbb{R}+i[0,2\pi/\ln(q^2))=\{ a+bi\ |\ a\in \mathbb{R}, 0\le b\le 2\pi/\ln(q^2)
\}\subseteq \mathbb{C}$.  If $q$ is a positive real number then
the following map
\[
  \begin{array}{ccc}
\mathbb{R}+i[0,2\pi/\ln(q^2)) &\longrightarrow &\mathbb{C}^{*} \\
x &\longmapsto &q^{2x}=e^{\ln(q^2)x}
\end{array}
\]
is a bijection.

A pair $(\mathrm{c},\bm{\lambda}/\bm{\mu})$ is called a placed $r$-skew shape if it consists of a skew shape $\bm{\lambda}/\bm{\mu}$ and a content
function
$$\mathrm{c}\colon \{\hbox{boxes of $\bm{\lambda}/\bm{\mu}$}\} \longrightarrow
\mathbb{R}+i[0,2\pi/\ln(q^2))
\qquad\hbox{such that}$$

\[
  \begin{array}{cc}
\mathrm{c}({\rm box}_j)\ge \mathrm{c}({\rm box}_i), \hfill
\quad &\hbox{if $j>i$ and $\mathrm{c}({\rm box}_j)-\mathrm{c}({\rm box}_i)\in \mathbb{Z}$},\\
\mathrm{c}({\rm box}_j)=\mathrm{c}({\rm box}_i)+1,\hfill
&\hbox{if and only if ${\rm box}_j$ and ${\rm box}_i$
are on adjacent diagonals,} \\
\mathrm{c}({\rm box}_j)=\mathrm{c}({\rm box}_i),\hfill
&\hbox{if and only if ${\rm box}_j$ and ${\rm box}_i$
are on the same diagonal.}
\end{array}
\]
This is a generalization of the usual notion of the content of a box
in an $r$-partition.

The following lemma can be regarded as a generalization of [Ra1, Lemma 2.2] and can be proved similarly.
\begin{lemma}\label{lemma-uniques}
Let $(\mathrm{c},\bm{\lambda}/\bm{\mu})$ be a placed $r$-skew shape with $n$ boxes and let $\mathcal{T}$ be a standard tableau of shape $\bm{\lambda}/\bm{\mu}.$ We denote by $\mathcal{T}|i$ the box of $\mathcal{T}$ containing $i$ and $\mathrm{p}(\mathcal{T}|i)$ the index of the component in which the box is. Then the sequence
\[(\mathrm{p}(\mathcal{T}|1),\ldots,\mathrm{p}(\mathcal{T}|n),\mathrm{c}(\mathcal{T}|1),\ldots,\mathrm{c}(\mathcal{T}|n))\]
uniquely determines the placed $r$-skew shape $(c,\bm{\lambda}/\bm{\mu})$ and the standard tableau $\mathcal{T}.$
\end{lemma}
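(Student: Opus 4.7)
The plan is to reduce the statement to the single-component case, which is [Ra1, Lemma 2.2], by first peeling off the position data to separate the $r$ components, and then reconstructing each component from its ordered content sequence.

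First, the position values produce a partition. The entries $\mathrm{p}(\mathcal{T}|1),\ldots,\mathrm{p}(\mathcal{T}|n)$ partition the label set $\{1,\ldots,n\}$ into disjoint subsets $S_1,\ldots,S_r$ where $S_k=\{i:\mathrm{p}(\mathcal{T}|i)=k\}$. Since $|S_k|=n_k=|\lambda^{(k)}|-|\mu^{(k)}|$, this already tells us which labels sit in which component and with what cardinality. Combined with the subsequence of contents $(\mathrm{c}(\mathcal{T}|i))_{i\in S_k}$, we have extracted, for each $k$, the full data of $\mathcal{T}$ restricted to the $k$-th component in the form (labels, contents).

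Second, I apply Ram's result componentwise. Because the content function of a placed $r$-skew shape can be specified independently on each component, and because the standard-tableau condition on $\bm{\lambda}/\bm{\mu}$ decomposes into standard-tableau conditions on the individual components, it suffices to show, for each fixed $k$, that the sequence $(\mathrm{c}(\mathcal{T}|i))_{i\in S_k}$ (together with the identification of $S_k$) uniquely determines the placed skew shape $(\mathrm{c},\lambda^{(k)}/\mu^{(k)})$ and the restricted tableau. This is exactly [Ra1, Lemma 2.2]. Its proof proceeds by induction on the number of boxes, adding labels one at a time in increasing order; the defining properties of the content function (equal contents exactly on the same diagonal, contents differing by $1$ exactly on adjacent diagonals) together with the standard-tableau constraint force each new box into a unique addable position on a unique diagonal, so both the evolving shape and the placement of $\mu^{(k)}$ are pinned down by the successive contents. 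Reassembling the reconstructions across $k=1,\ldots,r$ yields the placed $r$-skew shape and the tableau $\mathcal{T}$.

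The main technical point, as in Ram's argument, is the initial step within each component: the very first label in $S_k$ identifies a corner of $\lambda^{(k)}/\mu^{(k)}$, and its content is what anchors the shift of $\mu^{(k)}$ in the underlying diagonal coordinate system $\mathbb{R}+i[0,2\pi/\ln(q^2))$. Once this anchor is fixed, the remaining boxes are placed uniquely by the adjacent-diagonal and same-diagonal rules, and no ambiguity remains between different placed shapes or different tableaux that could have produced the same data.
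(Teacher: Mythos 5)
Your proposal is correct and follows essentially the same route as the paper, which simply remarks that the lemma generalizes [Ra1, Lemma 2.2] and can be proved similarly; your reduction — using the position sequence to split the labels into the $r$ components and then invoking Ram's single-component reconstruction on each restricted content sequence — is exactly the intended argument, spelled out in more detail. The key observations you make (that the standard-tableau condition and the content-function axioms restrict componentwise, so Ram's inductive box-by-box reconstruction applies to each piece independently) are precisely what justifies the paper's one-line appeal to [Ra1].
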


\subsection{The $\tau$ operators}
Let $J=\{1,2,\ldots,r\}.$ We now fix once and for all a total order on the set of $r$-th roots of unity via setting $\zeta_{k} :=\zeta^{k-1}$ for $1\leq k\leq r.$ Set $S :=\{\zeta_{1},\zeta_{2},\ldots,\zeta_{r}\}.$

A finite dimensional $\widehat{Y}_{r,n}$-module is called calibrated if it has a basis $\{w_{t}\}$ such that for all $1\leq i\leq n$ and all $w_{t},$ we have \[X_{i}w_{t}=\nu_{i}w_{t}\quad \text{for some }\nu_{i}\in \mathbb{C}^{*},\qquad t_{i}w_{t}=\zeta_{j_{i}}w_t \quad \text{for some }\zeta_{j_{i}}\in S.\]

Let $X$ be the abelian group generated by the elements $X_{1}^{\pm1},\ldots,X_{n}^{\pm1}$ of $\widehat{Y}_{r,n}$ and let $V$ denote by all the group homomorphisms from $X$ to $\mathbb{C}^{*},$ which can be identified with $(\mathbb{C}^{*})^{n}$ by identifying the element $\nu=(\nu_1,\ldots,\nu_{n})\in (\mathbb{C}^{*})^{n}$ with the homomorphism given by $\nu(X_{i})=\nu_{i}$ for all $1\leq i\leq n.$

Let $M$ be a finite dimensional $\widehat{Y}_{r,n}$-module. For each $\nu=(\nu_1,\ldots,\nu_{n})\in V$ and $j=(j_{1},\ldots,j_{n})\in J^{n},$ we define the weight spaces and generalized weight spaces of $M$ by
\begin{equation}
M_{(\nu, j)}=\{w\in M\:|\:(X_{a}-\nu_{a})w=(t_{a}-\zeta_{j_{a}})w=0\text{ for all }1\leq a\leq n\}.\label{M-J}
\end{equation}
and
\begin{equation}
M_{(\nu, j)}^{\mathrm{gen}}=\{w\in M\:|\:(X_{a}-\nu_{a})^{N}w=(t_{a}-\zeta_{j_{a}})w=0\text{ for all }1\leq a\leq n\text{ and }N\gg 0\},\label{M-J-J}
\end{equation}
respectively. By definition, we have $M_{(\nu, j)}\subseteq M_{(\nu, j)}^{\mathrm{gen}}$ and $M$ is calibrated if and only if $M_{(\nu, j)}=M_{(\nu, j)}^{\mathrm{gen}}$ for all $\nu\in V$ and $j\in J^{n}.$ Since the elements $X_{i}, t_{i},$ for all $1\leq i\leq n,$ pairwise commute, by Cayley-Hamilton theorem, we have \[M=\bigoplus_{(\nu,j)\in V\times J^{n}}M_{(\nu, j)}^{\mathrm{gen}}.\]
The weights of $M$ are elements of the following set \[\mathrm{supp}(M)=\{(\nu,j)\in V\times J^{n}\:|\:M_{(\nu, j)}^{\mathrm{gen}}\neq 0\}.\]
An element of $M_{(\nu, j)}$ is called a weight vector of weight $(\nu,j).$

Assume that $M$ is a finite dimensional $\widehat{Y}_{r,n}$-module and an element $(\nu,j)=((\nu_1,\ldots,\nu_{n}),$ $(j_{1},\ldots,j_{n}))\in \mathrm{supp}(M).$ For each $1\leq i\leq n-1$ such that $\nu_{i}\neq \nu_{i+1}$ and $j_{i}=j_{i+1},$ we define
\[
  \begin{array}{ccc}
\tau_{i}: ~~~M_{(\nu, j)}^{\mathrm{gen}} &\longrightarrow &M_{(s_{i}\nu, s_{i}j)}^{\mathrm{gen}} \\
w &\longmapsto &\Big(g_{i}-\frac{(q-q^{-1})X_{i+1}}{X_{i+1}-X_{i}}\Big)w;
\end{array}
\]
for each $1\leq i\leq n-1$ such that $j_{i}\neq j_{i+1},$ we define $\tau_{i}w=g_{i}w.$

It is easy to see that the map $\tau_{i}$ is well-defined. The following lemma is proved in [C3, Lemma 2.9].
\begin{lemma}\label{lemma-relations}
For each finite dimensional $\widehat{Y}_{r,n}$-module $M$ and $(\nu,j)\in \mathrm{supp}(M)$, we have
\begin{equation}
\tau_{i}X_{i+1}=X_{i}\tau_{i},\quad X_{i+1}\tau_{i}=\tau_{i}X_{i} \text{ if } \nu_{i}\neq \nu_{i+1}\text{ and } j_{i}=j_{i+1};\label{phi-x-nuj1-1}
\end{equation}
\begin{equation}
\tau_{i}X_{j}=X_{j}\tau_{i}\text{ if } j\neq i, i+1;\label{phi-x-1}
\end{equation}
\begin{equation}
\tau_{i}\tau_{j}=\tau_{j}\tau_{i}\text{ if } \vert i-j\vert> 1;\label{phi-phi-1}
\end{equation}
\begin{equation}
\tau_{i}^{2}=\frac{(qX_{i+1}-q^{-1}X_i)(qX_{i}-q^{-1}X_{i+1})}{(X_{i+1}-X_{i})(X_{i}-X_{i+1})}\text{ if } \nu_{i}\neq \nu_{i+1}\text{ and } j_{i}=j_{i+1};\label{phi-2-nuj1-1}
\end{equation}
\begin{equation}
\tau_{i}\tau_{i+1}\tau_{i}=\tau_{i+1}\tau_{i}\tau_{i+1}\text{ if } \nu_{i},\nu_{i+1}, \nu_{i+2}\text{ are different from each other}.\label{phi-2-nuj1-2}
\end{equation}
\end{lemma}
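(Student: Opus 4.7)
The plan is to reduce everything to direct computation on generalized weight spaces using two simplifying observations. First, on $M^{\mathrm{gen}}_{(\nu,j)}$ the idempotent $e_a=\tfrac{1}{r}\sum_{s=0}^{r-1}t_a^s t_{a+1}^{-s}$ acts as the scalar $\delta_{j_a,j_{a+1}}$: the $t$'s act semisimply with eigenvalues that are $r$-th roots of unity $\zeta_{j_a},\zeta_{j_{a+1}}$, so the sum collapses to the Kronecker delta of the indices. Combined with (\ref{gxxg}), this means that on $M^{\mathrm{gen}}_{(\nu,j)}$ with $j_i=j_{i+1}$ the braid commutation relations become $g_iX_i=X_{i+1}g_i-(q-q^{-1})X_{i+1}$ and $g_iX_{i+1}=X_ig_i+(q-q^{-1})X_{i+1}$, while on weight spaces with $j_i\ne j_{i+1}$ the correction terms vanish outright. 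Second, because $\nu_i\ne\nu_{i+1}$ in the long form, the operator $X_{i+1}-X_i$ is invertible on $M^{\mathrm{gen}}_{(\nu,j)}$ (invert the scalar and expand around the nilpotent part as a geometric series), so the formula defining $\tau_i$ genuinely defines an operator $M^{\mathrm{gen}}_{(\nu,j)}\to M^{\mathrm{gen}}_{(s_i\nu,s_ij)}$.

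With this in hand, (\ref{phi-x-nuj1-1}) follows by a direct expansion: $\tau_iX_{i+1}=g_iX_{i+1}-\frac{(q-q^{-1})X_{i+1}^2}{X_{i+1}-X_i}=X_ig_i+(q-q^{-1})X_{i+1}-\frac{(q-q^{-1})X_{i+1}^2}{X_{i+1}-X_i}$, and the last two terms combine as $(q-q^{-1})\frac{X_{i+1}(X_{i+1}-X_i)-X_{i+1}^2}{X_{i+1}-X_i}=-\frac{(q-q^{-1})X_iX_{i+1}}{X_{i+1}-X_i}$, which is exactly $X_i\tau_i$; the second half is symmetric. Identity (\ref{phi-x-1}) is immediate from (\ref{giXj}) and (\ref{xyyx}) since the rational correction only involves $X_i, X_{i+1}$. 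Identity (\ref{phi-phi-1}) reduces to $g_ig_j=g_jg_i$ in each of the four sub-cases determined by whether each $\tau$ is long or short, because all rational $X$-factors commute with one another and with $g_i,g_j$ for $|i-j|>1$. For (\ref{phi-2-nuj1-1}) I would use that $\tau_i$ intertwines $X_i$ and $X_{i+1}$ (just proved) to slide the second factor past the first in $\tau_i^2=(g_i-f)(g_i-f)$ with $f=\frac{(q-q^{-1})X_{i+1}}{X_{i+1}-X_i}$, apply $g_i^2=1+(q-q^{-1})g_i$ on this weight space (since $e_i$ acts as $1$), and clear the common denominator $(X_{i+1}-X_i)^2$; the resulting numerator matches $(qX_{i+1}-q^{-1}X_i)(qX_i-q^{-1}X_{i+1})\cdot(-1)$.

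The main obstacle is the braid relation (\ref{phi-2-nuj1-2}), whose hypothesis constrains only the $\nu$'s, so we must examine four sub-cases indexed by which of $j_i=j_{i+1}$ and $j_{i+1}=j_{i+2}$ hold. When all three $j$'s coincide, both $\tau_i$ and $\tau_{i+1}$ are of the long form, every $e$ acts as $1$, and the computation is literally Ram's affine-Hecke braid verification from [Ra1] read in our setting. When the three $j$'s are pairwise distinct, one has $\tau_i=g_i$ and $\tau_{i+1}=g_{i+1}$, so the identity reduces to the braid relation among the $g$'s in (\ref{rel-def-Y1}). In the two mixed cases one side mixes a long $\tau$ with a short one; to move a rational factor in $X_i,X_{i+1},X_{i+2}$ past a neighbouring $g$ one must track how the $t$-subscripts, hence the $j$-indices, are permuted via $g_it_j=t_{s_i(j)}g_i$, so that the weight of the vector after each $g$ is known and the correct form of the next $\tau$ can be applied. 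Once this bookkeeping is organised, the rational corrections on the two sides match termwise and the identity follows.
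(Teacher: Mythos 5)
Your proof is essentially correct, but it takes a genuinely different route from the paper for the simple reason that the paper offers no proof at all: it disposes of this lemma with the single sentence ``The following lemma is proved in [C3, Lemma 2.9].'' What you supply is a self-contained verification, and its two organising observations are exactly the right ones: on a generalized weight space $e_a$ acts by the scalar $\delta_{j_a,j_{a+1}}$ (so the relations \eqref{gxxg} collapse to the affine Hecke algebra relations when $j_i=j_{i+1}$ and lose their correction terms when $j_i\neq j_{i+1}$), and $X_{i+1}-X_i$ is invertible on $M^{\mathrm{gen}}_{(\nu,j)}$ because it is a nonzero scalar plus a nilpotent. I checked your computations for \eqref{phi-x-nuj1-1} and \eqref{phi-2-nuj1-1} and they are right; in particular for \eqref{phi-2-nuj1-1} the quantity $1+\frac{(q-q^{-1})^2 X_iX_{i+1}}{(X_{i+1}-X_i)(X_i-X_{i+1})}$ does equal the stated rational expression. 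What your approach buys is independence from the reference [C3]; what it costs is the case bookkeeping in \eqref{phi-2-nuj1-2}, which the citation hides.

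One small caution on that bookkeeping: your enumeration of ``four sub-cases indexed by which of $j_i=j_{i+1}$ and $j_{i+1}=j_{i+2}$ hold'' silently merges two genuinely different situations inside the case where both equalities fail, namely $j_i,j_{i+1},j_{i+2}$ pairwise distinct versus $j_i=j_{i+2}\neq j_{i+1}$. In the latter, both $\tau_i$ and $\tau_{i+1}$ are short on the initial weight space, yet a long intertwiner appears in the middle of each side (after one short step the entries $j_i$ and $j_{i+2}$ become adjacent and equal), so this is not covered by ``reduces to the braid relation among the $g$'s,'' nor is it literally one of your ``two mixed cases.'' It does work out: both sides equal $g_ig_{i+1}g_iw-\frac{(q-q^{-1})\nu_{i+2}}{\nu_{i+2}-\nu_i}w$, using $g_i^2w=w$ when $e_iw=0$. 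This is the same five-way case division ($\mathrm{p}$ all equal, all distinct, two adjacent-equal cases, and the outer-equal case) that the paper itself runs through when verifying the braid relation in the proof of Theorem \ref{classifi-maintheorem}, so you should make the fifth case explicit. With that adjustment the argument is complete.
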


Let us state the following two facts, which are used in the next section. Assume that $M$ is a finite dimensional $\widehat{Y}_{r,n}$-module and $w_{(\nu,j)}$ is a weight vector in $M$ of weight $(\nu,j).$

\noindent(2.5a) If $\nu_{i}\neq \nu_{i+1}\text{ and } j_{i}=j_{i+1}$, then \[\tau_{i}w_{(\nu,j)}=\Big(g_{i}-\frac{(q-q^{-1})\nu_{i+1}}{\nu_{i+1}-\nu_{i}}\Big)w_{(\nu,j)}\]
is a weight vector of weight $(s_{i}\nu, j).$

\noindent(2.5b) By \eqref{phi-2-nuj1-1}, we have \[\tau_{i}^{2}w_{(\nu,j)}=\frac{(q\nu_{i+1}-q^{-1}\nu_i)(q\nu_{i}-q^{-1}\nu_{i+1})}{(\nu_{i+1}-\nu_{i})(\nu_{i}-\nu_{i+1})}w_{(\nu,j)}.\]
Thus, if $j_{i}=j_{i+1},$ $\nu_{i}\neq \nu_{i+1}$ and $\nu_{i}\neq q^{\pm2}\nu_{i+1},$ then $\tau_{i}w_{(\nu,j)}\neq 0.$

\section{Classification of irreducible calibrated representations}
In this section, we present a complete classification and construction of irreducible calibrated representations of $\widehat{Y}_{r,n}$ over $\mathbb{C}.$
\begin{theorem}\label{classifi-maintheorem}
Let $(\mathrm{c},\bm{\lambda}/\bm{\mu})$ be a placed $r$-skew shape with $n$ boxes. Let $\widehat{Y}^{(\mathrm{c},\bm{\lambda}/\bm{\mu})}$ be the $\mathbb{C}$-vector space spanned by all $w_{\mathcal{T}},$ where $\mathcal{T}\in \mathrm{Std}(\bm{\lambda}/\bm{\mu}).$ We define an action of $\widehat{Y}_{r,n}$ on $\widehat{Y}^{(\mathrm{c},\bm{\lambda}/\bm{\mu})}$ by the following formulas:
\begin{equation}\label{classi-theorem1}
t_{j}w_{\mathcal{T}}=\zeta_{\mathrm{p}(\mathcal{T}|j)}w_{\mathcal{T}}\qquad\text{ for }j=1,2,\ldots,n.
\end{equation}
\begin{equation}\label{classi-theorem2}
X_{j}w_{\mathcal{T}}=q^{2\mathrm{c}(\mathcal{T}|j)}w_{\mathcal{T}}\qquad\text{ for }j=1,2,\ldots,n.
\end{equation}
For $j=1,2,\ldots,n-1,$ if $\mathrm{p}(\mathcal{T}|i)\neq \mathrm{p}(\mathcal{T}|i+1),$ we have
\begin{equation}\label{classi-theorem3}
g_{i}w_{\mathcal{T}}=w_{s_{i}\mathcal{T}};
\end{equation}
if $\mathrm{p}(\mathcal{T}|i)=\mathrm{p}(\mathcal{T}|i+1),$ then
\begin{equation}\label{classi-theorem4}
g_{i}w_{\mathcal{T}}=\frac{q^{2\mathrm{c}(\mathcal{T}|i+1)}(q-q^{-1})}{q^{2\mathrm{c}(\mathcal{T}|i+1)}-q^{2\mathrm{c}(\mathcal{T}|i)}}w_{\mathcal{T}}+
\frac{q^{2\mathrm{c}(\mathcal{T}|i+1)+1}-q^{2\mathrm{c}(\mathcal{T}|i)-1}}{q^{2\mathrm{c}(\mathcal{T}|i+1)}-q^{2\mathrm{c}(\mathcal{T}|i)}}
w_{s_{i}\mathcal{T}},
\end{equation}
where $\sigma(\mathcal{T})$ is the tableau obtained from $\mathcal{T}$ by applying $\sigma$ on the numbers contained in the boxes of $\mathcal{T}$ for any $\sigma\in \mathfrak{S}_{n}.$ Then we have

$(a)$ $\widehat{Y}^{(\mathrm{c},\bm{\lambda}/\bm{\mu})}$ is an irreducible calibrated $\widehat{Y}_{r,n}$-module.

$(b)$ The modules $\widehat{Y}^{(\mathrm{c},\bm{\lambda}/\bm{\mu})}$ for different $(\mathrm{c},\bm{\lambda}/\bm{\mu})$ are non-isomorphic.

$(c)$ Every irreducible calibrated $\widehat{Y}_{r,n}$-module is isomorphic to $\widehat{Y}^{(\mathrm{c},\bm{\lambda}/\bm{\mu})}$ for some placed $r$-skew shape $(\mathrm{c},\bm{\lambda}/\bm{\mu})$.
\end{theorem}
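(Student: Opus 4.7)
The plan is to prove (a), (b), (c) in turn, adapting the strategy of [Ra1] with attention to the new data carried by the generators $t_1,\ldots,t_n$, which record the $r$-component index of each box.

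For (a), I would first verify that formulas \eqref{classi-theorem1}--\eqref{classi-theorem4} extend to an action of $\widehat{Y}_{r,n}$. The relations among the $t_i$'s and among the $X_i$'s are immediate since these operators act diagonally, as is $t_i^r=1$. The intertwining $g_it_j=t_{s_i(j)}g_i$ is checked by separating the two cases in \eqref{classi-theorem3}--\eqref{classi-theorem4}; the quadratic relation $g_i^2=1+(q-q^{-1})e_ig_i$ reduces to a $2\times 2$ block computation in the span of $\{w_\mathcal{T}, w_{s_i\mathcal{T}}\}$ (note $e_iw_\mathcal{T}=\delta_{\mathrm{p}(\mathcal{T}|i),\mathrm{p}(\mathcal{T}|i+1)}w_\mathcal{T}$). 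The commutation $g_ig_j=g_jg_i$ for $|i-j|\ge 2$ is immediate, and the braid relation $g_ig_{i+1}g_i=g_{i+1}g_ig_{i+1}$ splits into cases according to the pattern of $\mathrm{p}(\mathcal{T}|i),\mathrm{p}(\mathcal{T}|i+1),\mathrm{p}(\mathcal{T}|i+2)$; when the three component indices agree, this reduces to the original Hecke-type computation of [Ra1], and the mixed cases reduce to shorter variants. The $X_1$-relations of \eqref{rel-def-Y2} are then forced by taking $X_{i+1}:=g_iX_ig_i$ to coincide with the diagonal $X_{i+1}$-action, a consistency check that uses \eqref{gxxg} at the level of the action on $w_\mathcal{T}$. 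Calibration is then manifest from \eqref{classi-theorem1}--\eqref{classi-theorem2}. For irreducibility, Lemma~\ref{lemma-uniques} shows that the assignment $\mathcal{T}\mapsto (\mathrm{p}(\mathcal{T}|\bullet),\mathrm{c}(\mathcal{T}|\bullet))$ is injective, so each weight space is one-dimensional, spanned by $w_\mathcal{T}$; applying the operators $\tau_i$ from Section~2.3 and invoking (2.5a)--(2.5b), any two $w_\mathcal{T}, w_{\mathcal{T}'}$ are connected by the algebra action since any two standard tableaux of the same shape are linked by a chain of admissible simple transpositions.

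Part (b) is immediate from Lemma~\ref{lemma-uniques}: the support of $\widehat{Y}^{(\mathrm{c},\bm{\lambda}/\bm{\mu})}$ recovers the sequences $(\mathrm{p}(\mathcal{T}|1),\ldots,\mathrm{p}(\mathcal{T}|n),\mathrm{c}(\mathcal{T}|1),\ldots,\mathrm{c}(\mathcal{T}|n))$, which uniquely determine $(\mathrm{c},\bm{\lambda}/\bm{\mu})$, so distinct placed $r$-skew shapes give disjoint supports and thus non-isomorphic modules.

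Part (c) is the heart of the argument. Given an irreducible calibrated $M$, pick any $(\nu,j)\in\mathrm{supp}(M)$ and a weight vector $v$. The formula \eqref{phi-2-nuj1-1} combined with calibration imposes rigid constraints: whenever $j_i=j_{i+1}$, one must have $\nu_i\neq\nu_{i+1}$ (otherwise the generalized weight space would be strictly larger than the weight space), and furthermore the values $\nu_i/\nu_{i+1}=q^{\pm 2}$ can occur only when $\tau_iv=0$, which in turn forces the boxes labelled $i,i+1$ to be placed in a configuration mirroring the skew-shape rules of [Ra1]. Writing $\nu_k=q^{2c_k}$ and grouping indices with the same $j_k$ into $r$ separate sub-sequences, I would then run the construction of [Ra1, \S 4] inside each component: successive admissible applications of $\tau$-operators generate a collection of weights that, through the constraints above, is seen to be indexed by standard tableaux on a unique placed $r$-skew shape $(\mathrm{c},\bm{\lambda}/\bm{\mu})$ with $\mathrm{p}(\mathcal{T}|k)=j_k$ and $\mathrm{c}(\mathcal{T}|k)=c_k$. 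The main obstacle is precisely this reconstruction step, since the interaction of $j$-jumps (where $\tau_i=g_i$ acts freely) with $c$-adjacencies must be carefully tracked to rule out non-skew configurations; once the placed $r$-skew shape is produced, a nonzero $\widehat{Y}_{r,n}$-intertwiner $\widehat{Y}^{(\mathrm{c},\bm{\lambda}/\bm{\mu})}\to M$ is manufactured by sending the basis vector at weight $(\nu,j)$ to $v$, and Schur's lemma together with (a) yields the isomorphism.
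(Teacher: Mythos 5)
Your proposal follows essentially the same route as the paper: the same case analysis on the component indices $\mathrm{p}(\mathcal{T}|i)$ to verify the defining relations (reducing to the affine Hecke algebra of type $A$ when the components agree), Lemma~\ref{lemma-uniques} for non-isomorphism, the $\tau$-operators together with the Bruhat-interval connectivity of Lemma~\ref{lemma-bijections} for irreducibility, and a reduction of the classification step (c) to the argument of [Ra1, Theorem 4.1, Steps 4--6], which is exactly what the paper does (and, like you, it leaves those steps as a citation rather than spelling them out). The only cosmetic difference is that the paper extracts a single $w_{\mathcal{T}}$ from a nonzero submodule via an explicit projection $\pi_{\mathcal{T}}$ built from the spectral data, whereas you argue via one-dimensionality of the weight spaces; these are equivalent.
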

\begin{proof}
The theorem can be proved in exactly the same way as in [Ra1, Theorem 4.1], and we only emphasize the differences.\\

$\clubsuit\hspace{0.5mm}1$  We show that the formulas in (3.1-3.4) define a $\widehat{Y}_{r,n}$-module.

The representations of $\widehat{Y}_{r,2}$ have been studied in [ChPA1, Section 3.4], which implies that the relations $g_{i}^{2}=1+(q-q^{-1})e_{i}g_{i},$ $g_{i}t_{i}=t_{i+1}g_{i}$ and $g_{i}t_{i+1}=t_{i}g_{i}$ hold.

Since $g_{i}w_{\mathcal{T}}$ is a linear combination of $w_{\mathcal{T}}$ and $w_{s_{i}\mathcal{T}},$ the relations $g_{i}t_{j}=t_{j}g_{i},$ for $j\neq i, i+1,$ are also verified. The relations $t_{i}^{r}=1$ and $t_{i}t_j=t_jt_i$ are obviously true.

Then we verify the relations
\begin{equation}\label{classi-relations1-1-11}
X_{i+1}w_{\mathcal{T}}=g_{i}X_{i}g_{i}w_{\mathcal{T}}\quad \text{ for }1\leq i\leq n-1.
\end{equation}
If $\mathrm{p}(\mathcal{T}|i)=\mathrm{p}(\mathcal{T}|i+1)$, it follows from the representation theory of affine Hecke algebras of type $A.$ If $\mathrm{p}(\mathcal{T}|i)\neq\mathrm{p}(\mathcal{T}|i+1)$, then $s_{i}\mathcal{T}$ is standard and we have
\begin{equation}\label{classi-relations1-1-12}
g_{i}X_{i}g_{i}w_{\mathcal{T}}=g_{i}X_{i}w_{s_{i}\mathcal{T}}=
q^{2\mathrm{c}(s_{i}\mathcal{T}|i)}g_{i}w_{s_{i}\mathcal{T}}=
q^{2\mathrm{c}(\mathcal{T}|i+1)}w_{\mathcal{T}}=X_{i+1}w_{\mathcal{T}}.
\end{equation}

Next we verify the relations
\begin{equation}\label{classi-relations1-1}
g_{i}g_{j}w_{\mathcal{T}}=g_{j}g_{i}w_{\mathcal{T}}\quad \text{ for }|i-j|> 1.
\end{equation}
If $\mathrm{p}(\mathcal{T}|i)=\mathrm{p}(\mathcal{T}|i+1)$ and $\mathrm{p}(\mathcal{T}|j)=\mathrm{p}(\mathcal{T}|j+1)$, it follows from the representation theory of affine Hecke algebras of type $A.$

Assume that $\mathrm{p}(\mathcal{T}|i)\neq\mathrm{p}(\mathcal{T}|i+1)$ and $\mathrm{p}(\mathcal{T}|j)\neq\mathrm{p}(\mathcal{T}|j+1)$. Then $s_{i}\mathcal{T},$ $s_{j}\mathcal{T}$, $s_{j}s_{i}\mathcal{T}$ and $s_{i}s_{j}\mathcal{T}$, and we have $s_{i}s_{j}\mathcal{T}=s_{j}s_{i}\mathcal{T}$. Thus, $g_{i}g_{j}w_{\mathcal{T}}=w_{s_{i}s_{j}\mathcal{T}}=w_{s_{j}s_{i}\mathcal{T}}=g_{j}g_{i}w_{\mathcal{T}}.$

Next assume that $\mathrm{p}(\mathcal{T}|i)\neq\mathrm{p}(\mathcal{T}|i+1)$ and $\mathrm{p}(\mathcal{T}|j)=\mathrm{p}(\mathcal{T}|j+1)$. Then $s_{i}\mathcal{T}$ is standard and we have
\begin{equation}\label{classi-relations1-2}
g_{i}g_{j}w_{\mathcal{T}}=\frac{q^{2\mathrm{c}(\mathcal{T}|i+1)}(q-q^{-1})}{q^{2\mathrm{c}(\mathcal{T}|i+1)}-q^{2\mathrm{c}(\mathcal{T}|i)}}w_{s_{i}\mathcal{T}}+
\frac{q^{2\mathrm{c}(\mathcal{T}|i+1)+1}-q^{2\mathrm{c}(\mathcal{T}|i)-1}}{q^{2\mathrm{c}(\mathcal{T}|i+1)}-q^{2\mathrm{c}(\mathcal{T}|i)}}
g_{i}(w_{s_{j}\mathcal{T}})
\end{equation}
and
\begin{equation}\label{classi-relations1-3}
g_{j}g_{i}w_{\mathcal{T}}=\frac{q^{2\mathrm{c}(\mathcal{T}|i+1)}(q-q^{-1})}{q^{2\mathrm{c}(\mathcal{T}|i+1)}-q^{2\mathrm{c}(\mathcal{T}|i)}}w_{s_{i}\mathcal{T}}+
\frac{q^{2\mathrm{c}(\mathcal{T}|i+1)+1}-q^{2\mathrm{c}(\mathcal{T}|i)-1}}{q^{2\mathrm{c}(\mathcal{T}|i+1)}-q^{2\mathrm{c}(\mathcal{T}|i)}}
w_{s_{j}s_{i}\mathcal{T}}.
\end{equation}
If $s_{j}\mathcal{T}$ is standard, then $g_{i}w_{s_{j}\mathcal{T}}=w_{s_{i}s_{j}\mathcal{T}}=w_{s_{j}s_{i}\mathcal{T}};$ if $s_{j}\mathcal{T}$ is not standard, then $s_{j}s_{i}\mathcal{T}=s_{i}s_{j}\mathcal{T}$ is not standard either, and $g_{i}w_{s_{j}\mathcal{T}}=0=w_{s_{j}s_{i}\mathcal{T}}.$ In any case, we have $g_{i}g_{j}w_{\mathcal{T}}=g_{j}g_{i}w_{\mathcal{T}}.$

The case that $\mathrm{p}(\mathcal{T}|i)=\mathrm{p}(\mathcal{T}|i+1)$ and $\mathrm{p}(\mathcal{T}|j)\neq\mathrm{p}(\mathcal{T}|j+1)$ can be dealt with similarly.

Finally we verify the relations
\begin{equation}\label{classi-relations1-4}
g_{i}g_{i+1}g_{i}w_{\mathcal{T}}=g_{i+1}g_{i}g_{i+1}w_{\mathcal{T}}\quad \text{ for }1\leq i\leq n-1.
\end{equation}
If $\mathrm{p}(\mathcal{T}|i)=\mathrm{p}(\mathcal{T}|i+1)=\mathrm{p}(\mathcal{T}|i+2),$ it again follows from the representation theory of affine Hecke algebras of type $A.$

Assume that $\mathrm{p}(\mathcal{T}|i),$ $\mathrm{p}(\mathcal{T}|i+1),$ $\mathrm{p}(\mathcal{T}|i+2)$ are different from each other, then we have
\begin{equation}\label{classi-relations1-5}
g_{i}g_{i+1}g_{i}w_{\mathcal{T}}=w_{s_{i}s_{i+1}s_{i}\mathcal{T}}=w_{s_{i+1}s_{i}s_{i+1}\mathcal{T}}=g_{i+1}g_{i}g_{i+1}w_{\mathcal{T}}.
\end{equation}

Assume that $\mathrm{p}(\mathcal{T}|i)=\mathrm{p}(\mathcal{T}|i+1)\neq\mathrm{p}(\mathcal{T}|i+2).$ Then $s_{i+1}\mathcal{T}$ and $s_{i}s_{i+1}\mathcal{T}$ are standard and we have
\begin{align}\label{classi-relations1-6}
g_{i}g_{i+1}g_{i}w_{\mathcal{T}}&=g_{i}g_{i+1}\Big(\frac{q^{2\mathrm{c}(\mathcal{T}|i+1)}(q-q^{-1})}{q^{2\mathrm{c}(\mathcal{T}|i+1)}-q^{2\mathrm{c}(\mathcal{T}|i)}}w_{\mathcal{T}}+
\frac{q^{2\mathrm{c}(\mathcal{T}|i+1)+1}-q^{2\mathrm{c}(\mathcal{T}|i)-1}}{q^{2\mathrm{c}(\mathcal{T}|i+1)}-q^{2\mathrm{c}(\mathcal{T}|i)}}
w_{s_{i}\mathcal{T}}\Big)\\
&=g_{i}\Big(\frac{q^{2\mathrm{c}(\mathcal{T}|i+1)}(q-q^{-1})}{q^{2\mathrm{c}(\mathcal{T}|i+1)}-q^{2\mathrm{c}(\mathcal{T}|i)}}w_{s_{i+1}\mathcal{T}}+
\frac{q^{2\mathrm{c}(\mathcal{T}|i+1)+1}-q^{2\mathrm{c}(\mathcal{T}|i)-1}}{q^{2\mathrm{c}(\mathcal{T}|i+1)}-q^{2\mathrm{c}(\mathcal{T}|i)}}
g_{i+1}w_{s_{i}\mathcal{T}}\Big)\\
&=\frac{q^{2\mathrm{c}(\mathcal{T}|i+1)}(q-q^{-1})}{q^{2\mathrm{c}(\mathcal{T}|i+1)}-q^{2\mathrm{c}(\mathcal{T}|i)}}w_{s_{i}s_{i+1}\mathcal{T}}+
\frac{q^{2\mathrm{c}(\mathcal{T}|i+1)+1}-q^{2\mathrm{c}(\mathcal{T}|i)-1}}{q^{2\mathrm{c}(\mathcal{T}|i+1)}-q^{2\mathrm{c}(\mathcal{T}|i)}}
g_{i}g_{i+1}w_{s_{i}\mathcal{T}}
\end{align}
and
\begin{align}\label{classi-relations1-7}
g_{i+1}g_{i}g_{i+1}w_{\mathcal{T}}=
\frac{q^{2\mathrm{c}(\mathcal{T}|i+1)}(q-q^{-1})}{q^{2\mathrm{c}(\mathcal{T}|i+1)}-q^{2\mathrm{c}(\mathcal{T}|i)}}w_{s_{i}s_{i+1}\mathcal{T}}+
\frac{q^{2\mathrm{c}(\mathcal{T}|i+1)+1}-q^{2\mathrm{c}(\mathcal{T}|i)-1}}{q^{2\mathrm{c}(\mathcal{T}|i+1)}-q^{2\mathrm{c}(\mathcal{T}|i)}}
w_{s_{i+1}s_{i}s_{i+1}\mathcal{T}}.
\end{align}
If $s_{i}\mathcal{T}$ is standard, then $s_{i+1}s_{i}\mathcal{T}$ and $s_{i}s_{i+1}s_{i}\mathcal{T}$ are standard, and $g_{i}g_{i+1}w_{s_{i}\mathcal{T}}=w_{s_{i}s_{i+1}s_{i}\mathcal{T}}=w_{s_{i+1}s_{i}s_{i+1}\mathcal{T}};$ if $s_{i}\mathcal{T}$ is not standard, then $s_{i+1}s_{i}\mathcal{T}$ and $s_{i}s_{i+1}s_{i}\mathcal{T}$ are not standard either, and $g_{i}g_{i+1}w_{s_{i}\mathcal{T}}=0=w_{s_{i}s_{i+1}s_{i}\mathcal{T}}=w_{s_{i+1}s_{i}s_{i+1}\mathcal{T}}.$ In any case, we have $g_{i}g_{i+1}w_{s_{i}\mathcal{T}}=w_{s_{i+1}s_{i}s_{i+1}\mathcal{T}}.$

The cases that $\mathrm{p}(\mathcal{T}|i)\neq\mathrm{p}(\mathcal{T}|i+1)=\mathrm{p}(\mathcal{T}|i+2)$ and $\mathrm{p}(\mathcal{T}|i)=\mathrm{p}(\mathcal{T}|i+2)\neq\mathrm{p}(\mathcal{T}|i+1)$ can be dealt with similarly. We skip the details. \\

$\clubsuit\hspace{0.5mm}2$  The $\widehat{Y}_{r,n}$-module $\widehat{Y}^{(\mathrm{c},\bm{\lambda}/\bm{\mu})}$ is irreducible.

For each $1\leq k\leq n,$ we define the following two sets:
\[\mathcal{C}(k) :=\{q^{2\mathrm{c}(\mathcal{T}|k)}\:|\:\mathcal{T}\in \text{Std}(\bm{\lambda}/\bm{\mu})\text{ for some }\bm{\lambda}/\bm{\mu}\in \mathcal{S}_{r,n}\},\]
and
\[\overline{\mathcal{C}(k)} :=\{\zeta_{\mathrm{p}(\mathcal{T}|k)}\:|\:\mathcal{T}\in \text{Std}(\bm{\lambda}/\bm{\mu})\text{ for some }\bm{\lambda}/\bm{\mu}\in \mathcal{S}_{r,n}\}.\]

Let $\mathcal{T}$ be a standard tableau of shape $\bm{\lambda}/\bm{\mu}.$ We define
\begin{align}\label{classi-relations1-8}
\pi_{\mathcal{T}}=\prod_{k=1}^{n}\bigg(\prod_{\substack{c\in \mathcal{C}(k)\\c\neq q^{2\mathrm{c}(\mathcal{T}|k)}}}\frac{X_{k}-c}{q^{2\mathrm{c}(\mathcal{T}|k)}-c}\cdot \prod_{\substack{\bar{c}\in \overline{\mathcal{C}(k)}\\\bar{c}\neq \zeta_{\mathrm{p}(\mathcal{T}|k)}}}\frac{t_{k}-\bar{c}}{\zeta_{\mathrm{p}(\mathcal{T}|k)}-\bar{c}}
\bigg).
\end{align}
We have $\pi_{\mathcal{T}}w_{\mathcal{Q}}=\delta_{\mathcal{T}\mathcal{Q}}w_{\mathcal{T}}$ for all $\mathcal{Q}\in \text{Std}(\bm{\lambda}/\bm{\mu})$, which follows from Lemma \ref{lemma-uniques}.

Assume that $M$ is a nonzero submodule of $\widehat{Y}^{(\mathrm{c},\bm{\lambda}/\bm{\mu})}$ and $w=\sum a_{\mathcal{Q}}w_{\mathcal{Q}}$ is a nonzero element of $M$. Let $\mathcal{T}$ be some standard tableau such that the coefficient $a_{\mathcal{T}}$ is nonzero. Then $\pi_{\mathcal{T}}w=a_{\mathcal{T}}w_{\mathcal{T}}$, and so $w_{\mathcal{T}}\in M.$

By Lemma \ref{lemma-bijections}, we can identify $\text{Std}(\bm{\lambda}/\bm{\mu})$ with an interval in $\mathfrak{S}_n.$ Under this identification, there exists a chain $\mathcal{C}<s_{i_1}\mathcal{C}<\cdots<s_{i_{p}}\cdots s_{i_1}\mathcal{C}=\mathcal{T}$ such that all elements of the chain are standard tableaux of shape $\bm{\lambda}/\bm{\mu}.$ By definition, we have
\[\tau_{i_1}\cdots \tau_{i_{p}}w_{\mathcal{T}}=\rho w_{\mathcal{C}}\]
for some $\rho\in \mathbb{C}^{*}.$ So we have $w_{\mathcal{C}}\in M.$

Let $\mathcal{Q}$ be an arbitrary standard tableau of shape $\bm{\lambda}/\bm{\mu}.$ Again, there exists a chain $\mathcal{C}<s_{j_1}\mathcal{C}<\cdots<s_{j_{p}}\cdots s_{j_1}\mathcal{C}=\mathcal{Q}$ of standard tableaux of shape $\bm{\lambda}/\bm{\mu}.$ We have
\[\tau_{j_{p}}\cdots \tau_{j_{1}}w_{\mathcal{C}}=\rho' w_{\mathcal{Q}}\]
for some $\rho'\in \mathbb{C}^{*}.$ So we have $w_{\mathcal{Q}}\in M.$ It follows that $M=\widehat{Y}^{(\mathrm{c},\bm{\lambda}/\bm{\mu})}$ and $\widehat{Y}^{(\mathrm{c},\bm{\lambda}/\bm{\mu})}$ is irreducible. \\

$\clubsuit\hspace{0.5mm}3$  The $\widehat{Y}_{r,n}$-modules $\widehat{Y}^{(\mathrm{c},\bm{\lambda}/\bm{\mu})}$ for different $(\mathrm{c},\bm{\lambda}/\bm{\mu})$ are non-isomorphic.

It follows from Lemma \ref{lemma-uniques}.\\

$\clubsuit\hspace{0.5mm}4$  If $(\nu,j)=((\nu_1,\ldots,\nu_{n}),(j_{1},\ldots,j_{n}))$ is a weight of an irreducible calibrated $\widehat{Y}_{r,n}$-module, then for $1\leq i\leq n,$ we have $v_{i}=q^{2\mathrm{c}(\mathcal{T}|i)}$ and $j_{i}=\zeta_{\mathrm{p}(\mathcal{T}|i)}$ for some standard tableau $\mathcal{T}$ of placed $r$-skew shape.\\

$\clubsuit\hspace{0.5mm}5$  Suppose that $M$ is an irreducible calibrated $\widehat{Y}_{r,n}$-module and that $w_{\nu}$ is a weight vector in $M$ of weight $(\nu,j)$ such that $\nu_{i}=q^{\pm2}\nu_{i+1}$ and $j_{i}=j_{i+1}.$ Then we have $\tau_{i}w_{\nu}=0.$\\

$\clubsuit\hspace{0.5mm}6$  Any irreducible calibrated $\widehat{Y}_{r,n}$-module $M$ is isomorphic to $\widehat{Y}^{(\mathrm{c},\bm{\lambda}/\bm{\mu})}$ for some placed $r$-skew shape $\bm{\lambda}/\bm{\mu}.$

The claims $\clubsuit\hspace{0.5mm}4$-$6$ can be proved in exactly the same way as in [Ra1, Theorem 4.1 Steps 4-6]. We skip the details.
\end{proof}

\section{Applications}
In this section, we shall give several applications of Theorem \ref{classifi-maintheorem}.

\subsection{Restriction and induction}
Let $Y_{r,n}$ be the subalgebra of $\widehat{Y}_{r,n}$ generated by $t_1,\ldots,$ $t_{n},g_{1},\ldots,g_{n-1},$ which is identified the Yokonuma-Hekce algebra. Since $q$ is not a root of unity, $Y_{r,n}$ is semisimple and its irreducible representations are indexed by $r$-partitions of $n.$ The following theorem describes the decomposition of the irreducible $\widehat{Y}_{r,n}$-module $\widehat{Y}^{(\mathrm{c},\bm{\lambda}/\bm{\mu})}$ when it is restricted to the subalgebra $Y_{r,n}.$
\begin{theorem}\label{restriction-theorem-11}
Let $\widehat{Y}^{(\mathrm{c},\bm{\lambda}/\bm{\mu})}$ be the irreducible calibrated representation of $\widehat{Y}_{r,n}$, which is constructed in Theorem \ref{classifi-maintheorem}. Then we have
\begin{equation}\label{classifi-maintheorem-1}
\widehat{Y}^{(\mathrm{c},\bm{\lambda}/\bm{\mu})}\big\downarrow_{Y_{r,n}}^{\widehat{Y}_{r,n}}=\sum_{\bm{\nu}}
c_{\bm{\mu}\bm{\nu}}^{\bm{\lambda}}Y^{\bm{\nu}},
\end{equation}
where $\bm{\nu}$ runs over all $r$-partitions of $n$ and $Y^{\bm{\nu}}$ is the irreducible $Y_{r,n}$-module indexed by $\bm{\nu}.$ Moreover, $c_{\bm{\mu}\bm{\nu}}^{\bm{\lambda}} :=c_{\mu^{(1)}\nu^{(1)}}^{\lambda^{(1)}}\cdots c_{\mu^{(r)}\nu^{(r)}}^{\lambda^{(r)}},$ here each $c_{\mu^{(i)}\nu^{(i)}}^{\lambda^{(i)}}$ is the classical Littlewood-Richardson coefficient.
\end{theorem}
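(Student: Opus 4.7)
The strategy is to leverage calibration together with the color decomposition of $Y_{r,n}$-modules, reducing the problem to the analogous restriction theorem for affine Hecke algebras of type $A$ due to Ram [Ra1], applied component-by-component. Since $q$ is generic, the restriction $\widehat{Y}^{(\mathrm{c},\bm{\lambda}/\bm{\mu})}\big\downarrow_{Y_{r,n}}^{\widehat{Y}_{r,n}}$ is semisimple, and calibration provides a simultaneous $t$-eigenbasis $\{w_{\mathcal{T}}\}$. I would first partition this basis by color sequence: for $\mathbf{k}=(k_1,\dots,k_n)\in J^n$, set
$$M_{\mathbf{k}}:=\mathrm{span}\{w_{\mathcal{T}}\,:\,(\mathrm{p}(\mathcal{T}|1),\dots,\mathrm{p}(\mathcal{T}|n))=\mathbf{k}\}.$$
Only those $\mathbf{k}$ of type $\mathbf{m}=(m_1,\dots,m_r)$ with $m_i=|\lambda^{(i)}/\mu^{(i)}|$ contribute, giving $\binom{n}{\mathbf{m}}$ nonzero pieces. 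Reading off \eqref{classi-theorem1}--\eqref{classi-theorem4}, each $g_i$ with $k_i=k_{i+1}$ preserves $M_{\mathbf{k}}$, while each $g_i$ with $k_i\neq k_{i+1}$ sends $M_{\mathbf{k}}$ isomorphically onto $M_{s_i\mathbf{k}}$; hence the $\binom{n}{\mathbf{m}}$ subspaces of a fixed type form a single orbit under the transitive $\mathfrak{S}_n$-action on color sequences of that type.

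Next, for a fixed $\mathbf{k}$ I would identify $M_{\mathbf{k}}$ with the external tensor product of Ram's affine Hecke calibrated modules for the $r$ placed skew shapes $(\mathrm{c}|_{\lambda^{(i)}/\mu^{(i)}},\lambda^{(i)}/\mu^{(i)})$: formulas \eqref{classi-theorem2} and \eqref{classi-theorem4} for $X_j$ and for the color-preserving $g_i$'s reproduce Ram's seminormal formulas one component at a time. Applying the affine-to-finite Hecke restriction result from [Ra1] to each component then yields
$$M_{\mathbf{k}}\ \cong\ \bigotimes_{i=1}^{r}\Bigl(\sum_{\nu^{(i)}\vdash m_i}c^{\lambda^{(i)}}_{\mu^{(i)}\nu^{(i)}}\,H^{\nu^{(i)}}\Bigr)$$
as a module for the color-preserving sub-Hecke algebra $H_{m_1}(q)\otimes\cdots\otimes H_{m_r}(q)$, which sits inside $Y_{r,n}$ as the subalgebra acting on the primitive $t$-idempotent supported on the sequence $\mathbf{k}$.

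Finally, by the construction of irreducible $Y_{r,n}$-modules in [ChPA1], $Y^{\bm{\nu}}$ is obtained by inducing the external tensor product $H^{\nu^{(1)}}\boxtimes\cdots\boxtimes H^{\nu^{(r)}}$ from precisely this color-fixed sub-Hecke algebra up to $Y_{r,n}$. The color-changing $g_i$'s knit the $\binom{n}{\mathbf{m}}$ pieces $M_{\mathbf{k}}$ together, so each tensor-product summand above assembles into a single induced $Y^{\bm{\nu}}$, producing multiplicity $\prod_i c^{\lambda^{(i)}}_{\mu^{(i)}\nu^{(i)}}=c^{\bm{\lambda}}_{\bm{\mu}\bm{\nu}}$. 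The main obstacle will be this final reassembly: one must verify that the $\mathfrak{S}_n$-orbit structure on the $M_{\mathbf{k}}$'s under the color-changing generators matches the induced structure of $Y^{\bm{\nu}}$ without over- or under-counting, which can be checked against the dimension identity $\dim Y^{\bm{\nu}}=\binom{n}{\mathbf{m}}\prod_i\dim H^{\nu^{(i)}}$ together with a compatibility check for the intertwiners between adjacent $M_{\mathbf{k}}$'s coming from the $g_i$'s.
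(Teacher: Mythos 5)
The paper states this theorem without any proof at all (the theorem is followed immediately by Remark 4.2), so there is no argument of the author's to compare yours against; what follows is an assessment of your proposal on its own terms. Your route is the natural one and is essentially correct: the decomposition of the $t$-eigenbasis by colour sequence, the identification of each block $M_{\mathbf{k}}$ of the standard type $\mathbf{k}_0=(1^{m_1},\dots,r^{m_r})$ with the outer tensor product of Ram's calibrated modules for the components $(\mathrm{c}|_{\lambda^{(i)}/\mu^{(i)}},\lambda^{(i)}/\mu^{(i)})$, and the component-wise application of Ram's affine-to-finite restriction result all go through exactly as you describe (note that $e_i$ acts as $\delta_{\mathrm{p}(\mathcal{T}|i),\mathrm{p}(\mathcal{T}|i+1)}$ on $w_{\mathcal{T}}$, so the colour-changing $g_i$ indeed square to $1$ and give the isomorphisms between blocks, while the colour-preserving ones satisfy the honest Hecke relation on each block). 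The one place where your write-up is weaker than it needs to be is the final reassembly, which you flag as the main obstacle and propose to settle by a dimension count plus an intertwiner compatibility check. A dimension count alone does not pin down a decomposition into irreducibles, and the cleaner way to close this step is the standard idempotent-truncation argument: letting $e_{\mathbf{k}_0}$ be the primitive idempotent of $\mathbb{C}[(\mathbb{Z}/r\mathbb{Z})^n]\subseteq Y_{r,n}$ cutting out the colour sequence $\mathbf{k}_0$, one has $e_{\mathbf{k}_0}Y_{r,n}e_{\mathbf{k}_0}\cong H_{m_1}(q)\otimes\cdots\otimes H_{m_r}(q)$ and $e_{\mathbf{k}_0}Y^{\bm{\nu}}\cong H^{\nu^{(1)}}\boxtimes\cdots\boxtimes H^{\nu^{(r)}}$, so by semisimplicity and the Morita equivalence between the type-$\mathbf{m}$ block of $Y_{r,n}$ and $\bigotimes_i H_{m_i}(q)$ (this is the isomorphism theorem of [JaPA], or Clifford theory for the normal abelian subgroup $(\mathbb{Z}/r\mathbb{Z})^n$), the multiplicity of $Y^{\bm{\nu}}$ in the restriction equals the multiplicity of $\boxtimes_i H^{\nu^{(i)}}$ in $M_{\mathbf{k}_0}$, which is $\prod_i c^{\lambda^{(i)}}_{\mu^{(i)}\nu^{(i)}}$ by Ram's result. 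With that substitution your argument is complete; there is no genuine gap, only a last step that should be phrased as a Hom-space computation rather than a counting check.
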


\begin{remark}\label{rem-YH-remark}
The classical Littlewood-Richardson coefficient describes the decomposition of an irreducible representation of $\mathfrak{S}_k\times \mathfrak{S}_l$ when it is induced to $\mathfrak{S}_{k+l}.$ By generalizing this, it has been proved in [St, Theorem 4.5] (also [IJS, Theorem 4.7]) that the $c_{\bm{\mu}\bm{\nu}}^{\bm{\lambda}}$ describes the decomposition of an irreducible representation of $G(r,1,k)\times G(r,1,l)$ when it is induced to $G(r,1,k+l),$ where $G(r,1,k)$ denotes the wreath product $(\mathbb{Z}/r\mathbb{Z})\wr \mathfrak{S}_{k}.$

Theorem \ref{classifi-maintheorem-1} gives a new way of interpreting these coefficients. Then describes the decomposition of the restriction to $Y_{r,n}$ of an irreducible calibrated $\widehat{Y}_{r,n}$-module $\widehat{Y}^{(\mathrm{c},\bm{\lambda}/\bm{\mu})}.$
\end{remark}

Fix $k$ and $l$ such that $k+l=n.$ Let $\widehat{Y}_{r,k}$ be the subalgebra of $\widehat{Y}_{r,n}$ generated by $g_{i},$ $1\leq i\leq k-1,$ $X_{j}^{\pm1}$ and $t_{j},$ $1\leq j\leq k;$ let $\widehat{Y}_{r,l}$ be the subalgebra of $\widehat{Y}_{r,n}$ generated by $g_{i},$ $k+1\leq i\leq n-1,$ $X_{j}^{\pm1}$ and $t_{j},$ $k+1\leq j\leq n.$ In this way, $\widehat{Y}_{r,k}\otimes\widehat{Y}_{r,l}$ is naturally a subalgebra of $\widehat{Y}_{r,n}.$

Let $(\mathrm{a}, \bm{\theta})=(\mathrm{a}, (\theta^{(1)},\ldots,\theta^{(r)}))$ be a placed $r$-skew shape with $k$ boxes and let $(\mathrm{b}, \bm{\phi})=(\mathrm{b}, (\phi^{(1)},\ldots,\phi^{(r)}))$ be a placed $r$-skew shape with $l$ boxes. We label the boxes of $\theta^{(i)}$ with the numbers $k_{i-1}+1,\ldots,k_{i}$ along major diagonals from southwest to northeast for $1\leq i\leq r,$ where $k_{0}=1$ and $k_{i}=|\theta^{(i)}|.$ Similarly, we label the boxes of $\phi^{(i)}$ with the numbers $k+l_{i-1}+1,\ldots,k+l_{i}$ along major diagonals from southwest to northeast for $1\leq i\leq r,$ where $l_{0}=1$ and $l_{i}=|\phi^{(i)}|.$ Let $\widehat{Y}^{(\mathrm{a},\bm{\theta})}$ and $\widehat{Y}^{(\mathrm{b},\bm{\phi})}$ be the irreducible calibrated representations of $\widehat{Y}_{r,k}$ and $\widehat{Y}_{r,l}$ defined in Theorem \ref{classifi-maintheorem}.

In the following we always assume that $(\mathrm{a}, \bm{\theta})$ and $(\mathrm{b}, \bm{\phi})$ are such that
\[\mathrm{a}(\mathrm{box}_{(k_{i})})+1=\mathrm{b}(\mathrm{box}_{(k+l_{i-1}+1)}).\]
Let $\theta^{(i)}\ast_{v}\phi^{(i)}$ (resp. $\theta^{(i)}\ast_{h}\phi^{(i)}$) be the skew shape by placing $\theta^{(i)}$ and $\phi^{(i)}$ adjacent to each other in such a way that $\mathrm{box}_{(k+l_{i-1}+1)}$ of $\phi^{(i)}$ is immediately above (resp. to the right of) $\mathrm{box}_{(k_{i})}$ of $\theta^{(i)}.$

Let $\mathfrak{S}$ be the set of the sequences by permuting the numbers $1,2,\ldots,n.$ For each $0\leq s\leq r$ and $\{i_{1},\ldots,i_{s},j_{1},\ldots,j_{r-s}\}\in \mathfrak{S}$, we define
\[(\bm{\theta}\ast\bm{\phi})_{(i_{1},\ldots,i_{s},j_{1},\ldots,j_{r-s})}=(\theta^{(1)}\ast_{w_{1}}\phi^{(1)},\ldots,\theta^{(r)}\ast_{w_{r}}\phi^{(r)}),\]
where $w_{i_1}=\cdots=w_{i_{s}}=v$ and $w_{j_1}=\cdots=w_{j_{r-s}}=h.$

\begin{theorem}\label{inductions-theorem-11}
With notations and assumptions as above, we have
\begin{equation}\label{classifi-maintheorem-111}
\mathrm{Ind}_{\widehat{Y}_{r,k}\otimes\widehat{Y}_{r,l}}^{\widehat{Y}_{r,n}}(\widehat{Y}^{(\mathrm{a},\bm{\theta})}\otimes \widehat{Y}^{(\mathrm{b},\bm{\phi})})=\sum_{\substack{0\leq s\leq r\\\{i_{1},\ldots,i_{s},j_{1},\ldots,j_{r-s}\}\in \mathfrak{S}}}(\bm{\theta}\ast\bm{\phi})_{(i_{1},\ldots,i_{s},j_{1},\ldots,j_{r-s})}.
\end{equation}
\end{theorem}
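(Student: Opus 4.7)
The plan is to adapt Ram's induction theorem [Ra1, Theorem 4.2] to the Yokonuma setting, using the classification in Theorem \ref{classifi-maintheorem} as the main identification tool. The broad strategy is to (i) exhibit a weight basis of the induced module, (ii) verify it is calibrated, and (iii) read off the irreducible summands from the weight multiset via Lemma \ref{lemma-uniques}.

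First, I would produce a weight basis for the induced module. Starting from the tensor product basis $\{w_{\mathcal{T}_1}\otimes w_{\mathcal{T}_2}\}$ of $\widehat{Y}^{(\mathrm{a},\bm{\theta})}\otimes \widehat{Y}^{(\mathrm{b},\bm{\phi})}$, the induced module acquires a basis of the form $\{g_w\cdot (w_{\mathcal{T}_1}\otimes w_{\mathcal{T}_2})\}$, where $w$ runs over the minimal length $(\mathfrak{S}_k,\mathfrak{S}_l)$-shuffles in $\mathfrak{S}_n$. Using the commutation identities \eqref{gxxg} to move $g_w$ past the $X_j$'s, I would replace these vectors by a triangularly related $X$-eigenbasis; the weights that appear are the concatenated weight tuples of $w_{\mathcal{T}_1}\otimes w_{\mathcal{T}_2}$ permuted according to $w$. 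The compatibility condition $\mathrm{a}(\mathrm{box}_{k_i})+1=\mathrm{b}(\mathrm{box}_{k+l_{i-1}+1})$ ensures that every $X$-eigenvalue appearing in such a weight has the form $q^{2\mathrm{c}(\mathrm{box})}$ for the content function $\mathrm{c}$ of some glued placed $r$-skew shape, and it rules out the dangerous configuration $\nu_i=q^{\pm2}\nu_{i+1}$ with $j_i=j_{i+1}$ (which would make $\tau_i$ vanish by (2.5b)); consequently the induced module is calibrated.

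Next, Theorem \ref{classifi-maintheorem} yields a decomposition into irreducible calibrated summands $\widehat{Y}^{(\mathrm{c},\bm{\lambda}/\bm{\mu})}$, and by Lemma \ref{lemma-uniques} each summand is pinned down by any one of its weights. The final step is to match the weight multiset of the induced module with the disjoint union of the weight multisets of the $2^r$ glued shapes $(\bm{\theta}\ast\bm{\phi})_{(i_1,\ldots,i_s,j_1,\ldots,j_{r-s})}$ appearing in the statement. Each such glued shape is obtained by choosing, independently for each of the $r$ components $m$, whether $\phi^{(m)}$ is placed vertically above ($w_m=v$) or horizontally to the right of ($w_m=h$) $\theta^{(m)}$; the compatibility assumption guarantees that either choice extends $\mathrm{a}$ and $\mathrm{b}$ to a valid content function on the glued shape.

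The main obstacle will be verifying this combinatorial matching in detail. The box-labelling convention in Section 2 runs component-by-component along major diagonals, so the labelling on $(\bm{\theta}\ast\bm{\phi})_{(\ldots)}$ is not simply the concatenation of the labellings on $\bm{\theta}$ and $\bm{\phi}$, and the $(\mathfrak{S}_k,\mathfrak{S}_l)$-shuffles in the induced basis have to be matched against the linear extensions of the diagonal poset on each glued shape. Carrying out this bookkeeping, and simultaneously confirming that no placed $r$-skew shape outside the stated $2^r$ gluings can arise as an irreducible summand, is where the proof requires the most care; it follows the template of [Ra1, Theorem 4.2] component by component, with the $r$-fold product structure of the $t$-spectrum making the different components decouple cleanly.
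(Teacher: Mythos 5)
The paper states Theorem \ref{inductions-theorem-11} with no proof at all, so there is nothing to compare against line by line; your overall plan (shuffle basis of the induced module, triangular passage to a weight basis, matching of weight multisets against the $2^{r}$ gluings, all following the template of [Ra1]) is the natural and presumably the intended route. However, there is a concrete error at the heart of your argument. You claim that the compatibility condition $\mathrm{a}(\mathrm{box}_{(k_i)})+1=\mathrm{b}(\mathrm{box}_{(k+l_{i-1}+1)})$ \emph{rules out} weights with $\nu_i=q^{\pm2}\nu_{i+1}$ and $j_i=j_{i+1}$, and you deduce calibratedness from the resulting nonvanishing of the $\tau_i$. This is backwards: the condition \emph{forces} exactly this configuration. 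Whenever a shuffle places the last box of $\theta^{(m)}$ and the first box of $\phi^{(m)}$ in adjacent positions $i,i+1$, one gets $\nu_{i+1}=q^{2}\nu_{i}$ with $j_i=j_{i+1}$ (both boxes lie in component $m$), which by \eqref{phi-2-nuj1-1} is precisely the case $\tau_i^{2}=0$. Far from being an obstruction, these degenerate junctions are the mechanism by which the induced module splits into the vertical and horizontal gluings: already for $k=l=1$, $r=1$ the induced module is two-dimensional with weights $(q^{2a},q^{2a+2})$ and $(q^{2a+2},q^{2a})$, and it is the vanishing of the composite $\tau_1\tau_1$ that permits the decomposition into the row and the column. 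So your stated reason for calibratedness fails.

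The repair requires reorganizing the argument: calibratedness should be deduced from the fact that every generalized weight space of the induced module is one-dimensional (a one-dimensional generalized weight space is automatically a genuine weight space), and that in turn follows from the combinatorial bijection between $(\text{shuffles})\times\mathrm{Std}(\bm{\theta})\times\mathrm{Std}(\bm{\phi})$ and the disjoint union of the sets $\mathrm{Std}\bigl((\bm{\theta}\ast\bm{\phi})_{(i_1,\ldots,i_s,j_1,\ldots,j_{r-s})}\bigr)$ together with Lemma \ref{lemma-uniques}. In other words, the bookkeeping you postpone to the end is the \emph{input} to the calibratedness claim, not something that can be checked afterwards. Once calibratedness is in hand, Theorem \ref{classifi-maintheorem}(c) and the weight count identify the composition factors; you should then add an argument for why the sum is direct rather than merely an identity in the Grothendieck group --- e.g., Frobenius reciprocity produces a nonzero quotient map onto each $\widehat{Y}^{(\bm{\theta}\ast\bm{\phi})_{(\cdots)}}$, or one observes that a calibrated module with one-dimensional weight spaces whose weights are partitioned among pairwise non-isomorphic irreducibles, with no nonzero $\tau$-operator connecting the blocks, is semisimple. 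As written, the proposal establishes at most an equality of weight multisets, not the asserted decomposition.
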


\subsection{Simple $Y_{r,n}^{d}$-modules and its center}

Let $d\geq 1$ and $v_1,\ldots,v_d$ be some invertible elements of $\mathbb{C}$. Set $f_{1} :=(X_{1}-v_{1})\cdots (X_{1}-v_{d}).$ Let $\mathcal{J}_{d}$ denote the two-sided ideal of $\widehat{Y}_{r,n}$ generated by $f_{1}.$ We define the cyclotomic Yokonuma-Hecke algebra $Y_{r,n}^{d}=Y_{r,n}^{d}(q)$ to be the quotient $$Y_{r,n}^{d}=\widehat{Y}_{r,n}/\mathcal{J}_{d}.$$

Let $\bm{\lambda}/\bm{\mu}$ be an $r$-skew shape with $n$ boxes. We define
\[\mathrm{NW}(\bm{\lambda}/\bm{\mu})=\{\text{northwest cornor boxes of components of }\bm{\lambda}/\bm{\mu}\}.\]
\begin{lemma}\label{inductions-lemma-111}
Fix $v_1,\ldots,v_d\in \mathbb{C}^{*}$ and let $(\mathrm{c},\bm{\lambda}/\bm{\mu})$ be a placed $r$-skew shape with $n$ boxes. If
\[\{q^{2\mathrm{c}(b)}\:|\:b\in \mathrm{NW}(\bm{\lambda}/\bm{\mu})\}\subseteq \{v_1,\ldots,v_d\},\]
then the $\widehat{Y}_{r,n}$-module $\widehat{Y}^{(\mathrm{c},\bm{\lambda}/\bm{\mu})}$ defined in Theorem \ref{classifi-maintheorem} is a simple $Y_{r,n}^{d}$-module.
\end{lemma}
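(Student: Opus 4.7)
The plan is to show that the action of $\widehat{Y}_{r,n}$ on $\widehat{Y}^{(\mathrm{c},\bm{\lambda}/\bm{\mu})}$ from Theorem \ref{classifi-maintheorem} factors through the quotient $Y_{r,n}^{d}=\widehat{Y}_{r,n}/\mathcal{J}_{d}$, and then to invoke the irreducibility already proved in part $(a)$ of that theorem. Since $\mathcal{J}_{d}$ is the two-sided ideal generated by the single element $f_{1}=(X_{1}-v_{1})\cdots(X_{1}-v_{d})$, and since the module is spanned by weight vectors $w_{\mathcal{T}}$ on which $f_{1}$ acts by a scalar, it suffices to check that this scalar is zero for every $\mathcal{T}$: the relation $(af_{1}b)\cdot w = a\cdot f_{1}(bw) = 0$ for all $a,b\in\widehat{Y}_{r,n}$ then shows that the whole ideal $\mathcal{J}_{d}$ annihilates $\widehat{Y}^{(\mathrm{c},\bm{\lambda}/\bm{\mu})}$.

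By \eqref{classi-theorem2}, $X_{1}w_{\mathcal{T}}=q^{2\mathrm{c}(\mathcal{T}|1)}w_{\mathcal{T}}$, hence
\[
f_{1}w_{\mathcal{T}}=\prod_{j=1}^{d}\bigl(q^{2\mathrm{c}(\mathcal{T}|1)}-v_{j}\bigr)\,w_{\mathcal{T}}.
\]
Thus the claim reduces to the purely combinatorial observation that for every standard tableau $\mathcal{T}$ of shape $\bm{\lambda}/\bm{\mu}$, the box $\mathcal{T}|1$ containing the entry $1$ lies in $\mathrm{NW}(\bm{\lambda}/\bm{\mu})$. This is immediate: since $1$ is the smallest entry, no box of $\bm{\lambda}/\bm{\mu}$ can lie directly to the west or directly to the north of $\mathcal{T}|1$ within its component (otherwise the standard-tableau inequality would be violated), so $\mathcal{T}|1$ is a northwest corner box of its component. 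Combining this with the hypothesis $\{q^{2\mathrm{c}(b)}\mid b\in\mathrm{NW}(\bm{\lambda}/\bm{\mu})\}\subseteq\{v_{1},\ldots,v_{d}\}$ gives $q^{2\mathrm{c}(\mathcal{T}|1)}=v_{j}$ for some $j$, so $f_{1}w_{\mathcal{T}}=0$.

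With $f_{1}$ annihilating the whole module, the preceding ideal argument shows that $\widehat{Y}^{(\mathrm{c},\bm{\lambda}/\bm{\mu})}$ carries a well-defined $Y_{r,n}^{d}$-module structure. Since any $Y_{r,n}^{d}$-submodule is automatically a $\widehat{Y}_{r,n}$-submodule, the simplicity of $\widehat{Y}^{(\mathrm{c},\bm{\lambda}/\bm{\mu})}$ as a $\widehat{Y}_{r,n}$-module (Theorem \ref{classifi-maintheorem}(a)) transfers directly, completing the proof. No step here is genuinely hard; the only point to get right is the combinatorial identification of $\mathcal{T}|1$ as a northwest corner, which is the conceptual content of the hypothesis on the $v_{j}$'s.
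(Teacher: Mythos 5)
Your proof is correct and takes essentially the same route as the paper: both arguments observe that $X_{1}$ acts diagonally with eigenvalues $q^{2\mathrm{c}(\mathcal{T}|1)}$ where the boxes $\mathcal{T}|1$ are exactly the northwest corner boxes, so the hypothesis forces $f_{1}$ (and hence $\mathcal{J}_{d}$) to annihilate the module, and simplicity is inherited from Theorem~\ref{classifi-maintheorem}(a). Your write-up merely makes explicit the two steps the paper leaves implicit (that the vanishing of the minimal polynomial of $\rho(X_{1})$ on $\prod_{j}(t-v_{j})$ kills the whole two-sided ideal, and that $Y_{r,n}^{d}$-submodules are $\widehat{Y}_{r,n}$-submodules).
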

\begin{proof}
By the formula in Theorem \ref{classifi-maintheorem}, the matrix $\rho(X_1)$ of action of $X_1$ is diagonal with eigenvalues $q^{2\mathrm{c}(\mathcal{T}|1)}$ for $\mathcal{T}\in \text{Std}(\bm{\lambda}/\bm{\mu}).$ The boxes $\mathcal{T}|1,$ $\mathcal{T}\in \text{Std}(\bm{\lambda}/\bm{\mu}),$ are exactly the northwest corner boxes of $\bm{\lambda}/\bm{\mu}.$ Thus, the minimal polynomial of $\rho(X_1)$ is
\[p(t)=\prod_{b\in \mathrm{NW}(\bm{\lambda}/\bm{\mu})}(t-q^{2\mathrm{c}(b)}).\]
We are done.
\end{proof}

\begin{theorem}\label{inductions-theorem-111}
If $Y_{r,n}^{d}$ is semisimple, then its simple modules are exactly the modules $\widehat{Y}^{(\mathrm{c},\bm{\lambda})}$ constructed in Lemma \ref{inductions-lemma-111}, where $\bm{\lambda}=((\lambda_{1}^{(1)},\ldots,\lambda_{d}^{(1)}),\ldots,(\lambda_{1}^{(r)},\ldots,\lambda_{d}^{(r)}))$ is an $r$-tuple of $d$-partitions with $n$ boxes and $\mathrm{c}$ is the content function determined by
\[q^{2\mathrm{c}(b)}=v_{i}\quad \text{if }b\text{ is the northwest corner box of }\lambda_{i}^{(s)}\text{ for any }1\leq s\leq r.\]
\end{theorem}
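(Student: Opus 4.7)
The plan is to identify the simple $Y_{r,n}^{d}$-modules with the specific family of calibrated $\widehat{Y}_{r,n}$-modules constructed in Lemma \ref{inductions-lemma-111}, using the semisimplicity hypothesis as the bridge. First, by Lemma \ref{inductions-lemma-111}, every module $\widehat{Y}^{(\mathrm{c},\bm{\lambda})}$ listed in the statement factors through the quotient $Y_{r,n}^{d} = \widehat{Y}_{r,n}/\mathcal{J}_d$ and remains simple there, while Theorem \ref{classifi-maintheorem}(b) guarantees that distinct parameters yield non-isomorphic modules.

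For the converse, take any simple $Y_{r,n}^{d}$-module $M$ and inflate it to a simple $\widehat{Y}_{r,n}$-module on which $f_1(X_1)=0$. The crucial step is to show $M$ is calibrated, after which Theorem \ref{classifi-maintheorem}(c) identifies $M$ with $\widehat{Y}^{(\mathrm{c},\bm{\lambda}/\bm{\mu})}$ for some placed $r$-skew shape. To establish calibratedness, I would note that the minimal polynomial of $X_1$ on $M$ divides $\prod_{i=1}^{d}(X_1-v_i)$; under semisimplicity of $Y_{r,n}^{d}$ the parameters $v_1,\ldots,v_d$ must be distinct, so $X_1$ acts diagonalizably on $M$, and each $t_j$ is diagonalizable since $t_j^r=1$ has $r$ distinct roots in $\mathbb{C}$. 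It then remains to see that all $X_i$ are simultaneously diagonalizable with the $t_j$'s; this can be deduced either by adapting the seminormal-form arguments of [ChPA1] together with the commutativity \eqref{xyyx}, or more efficiently by invoking the generic semisimple classification of [ChPA2], where each simple $Y_{r,n}^{d}$-module in the semisimple regime is already exhibited on a basis of joint eigenvectors of $\{X_i,t_j\}$.

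Granted calibratedness, the relation $f_1(X_1)M=0$ combined with the eigenvalue formula \eqref{classi-theorem2} forces $q^{2\mathrm{c}(b)} \in \{v_1,\ldots,v_d\}$ for every $b\in\mathrm{NW}(\bm{\lambda}/\bm{\mu})$. To recast this data in the form of the statement, I would group the connected components of each $\lambda^{(s)}/\mu^{(s)}$ according to the value of $v_i$ assigned at their northwest corners: the genericity imposed by semisimplicity separates the $v_i$ enough that no two connected components share a diagonal, which forces each connected component to be an honest partition rather than a proper skew shape; the resulting $d$-tuple $(\lambda_1^{(s)},\ldots,\lambda_d^{(s)})$ for each $1\le s\le r$, equipped with the prescription $q^{2\mathrm{c}(b)}=v_i$ at the NW corner of $\lambda_i^{(s)}$, is exactly the data in the statement. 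The main obstacle throughout is the calibratedness assertion in the middle paragraph; it is a structural fact propagating the diagonalizability of $X_1$ and of the individual $t_j$ to semisimplicity of the whole commutative subalgebra they generate in $Y_{r,n}^{d}$, and the cleanest justification is to defer to the explicit seminormal construction in [ChPA1, ChPA2] for the generic semisimple case.
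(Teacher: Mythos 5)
The paper states this theorem with no proof at all, so there is nothing of the author's to compare against line by line; the model is the cyclotomic case in [Ra1], which is settled by a counting argument rather than by your route. Your outline --- show that every simple $Y_{r,n}^{d}$-module is calibrated, invoke Theorem~\ref{classifi-maintheorem}(c), and then use the northwest-corner constraint from Lemma~\ref{inductions-lemma-111} to cut the placed $r$-skew shapes down to $r$-tuples of $d$-partitions --- is workable, but its entire weight rests on the calibratedness claim, and that is exactly where your argument is thinnest. Diagonalizability of $X_1$ (from distinctness of the $v_i$) together with diagonalizability of each $t_j$ does \emph{not} formally propagate to the remaining $X_i$; the only justification you offer is [ChPA2, Proposition~4.6], which already exhibits every simple module of the semisimple $Y_{r,n}^{d}$ on a seminormal basis indexed by an $r$-tuple of $d$-partitions. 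If you are allowed to cite that, the honest version of your proof is one sentence ("by [ChPA2] all simples are calibrated with the stated weights; match them against Lemma~\ref{inductions-lemma-111} via Theorem~\ref{classifi-maintheorem}(b),(c)"), and the minimal-polynomial discussion of $X_1$ becomes redundant. A second soft spot: mere distinctness of $v_1,\dots,v_d$ is not enough for your final paragraph. To rule out a connected component that is a proper skew shape you must exclude two northwest corners of one connected component carrying values $v_i$ and $v_j$ with $v_i=q^{2k}v_j$ for some $0<|k|<n$; this stronger separation is part of the semisimplicity criterion (the analogue of Ariki's condition for $Y_{r,n}^{d}$), and you should invoke it explicitly rather than appeal to unexplained ``genericity.''

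A cleaner self-contained alternative, and the one implicit in [Ra1], avoids proving calibratedness of an abstract simple module altogether: the modules $\widehat{Y}^{(\mathrm{c},\bm{\lambda})}$ of Lemma~\ref{inductions-lemma-111} indexed by $r$-tuples of $d$-partitions of $n$ are simple and pairwise non-isomorphic by Theorem~\ref{classifi-maintheorem}(a),(b); when $Y_{r,n}^{d}$ is semisimple the number of its simple modules equals the number of such tuples (by [ChPA2, Proposition~4.6], or by the Tits deformation theorem together with the dimension count $\dim Y_{r,n}^{d}=r^{n}d^{n}n!$); hence the constructed modules exhaust the simples. You buy a shorter and less delicate argument at the price of a counting input; your route, if the calibratedness step were carried out independently of [ChPA2], would buy a more structural explanation of \emph{why} no non-calibrated simples occur in the semisimple regime.
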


Let $P_{n}^{\mathbb{C}}(T)$ be the subalgebra of $Y_{r,n}^{d}$ generated by $t_{1},\ldots,t_{n}$ and $X_{1}^{\pm1},\ldots,X_{n}^{\pm1}.$ Set
\[P_{n}^{\mathbb{C}}(T)^{\mathfrak{S}_{n}}\hspace*{-1.5pt}=\hspace*{-1.5pt}\{\sum d_{\alpha,\beta}X^{\alpha}t^{\beta}\hspace*{-2pt}\in \hspace*{-2pt} P_{n}^{\mathbb{C}}(T)|\sum d_{\alpha,\beta}X^{\alpha}t^{\beta}\hspace*{-1.5pt}=\hspace*{-1.5pt}\sum d_{\alpha,\beta}X^{w\alpha}t^{w\beta}~\mathrm{for~any}~w\in \mathfrak{S}_{n}\}.\]
\begin{theorem}\label{inductions-theorem-1111}
If $Y_{r,n}^{d}$ is semisimple, then its center is $P_{n}^{\mathbb{C}}(T)^{\mathfrak{S}_{n}}.$
\end{theorem}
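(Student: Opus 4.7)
The plan is to prove the two inclusions separately: the containment $P_{n}^{\mathbb{C}}(T)^{\mathfrak{S}_{n}} \subseteq Z(Y_{r,n}^{d})$ is obtained via a Bernstein--Lusztig-style commutation identity, while the reverse follows by a dimension count that crucially uses semisimplicity together with Theorem \ref{inductions-theorem-111}.

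\textbf{The inclusion $P_{n}^{\mathbb{C}}(T)^{\mathfrak{S}_{n}} \subseteq Z(Y_{r,n}^{d})$.} Elements of $P_{n}^{\mathbb{C}}(T)$ already commute with every $t_{j}$ and $X_{j}$ by \eqref{xyyx}, so one only needs to verify commutation with each $g_{i}$. First I would establish, by induction on a single monomial using \eqref{gxxg}, the identity
\[
g_{i}X^{\alpha} = (s_{i}^{X}X^{\alpha})\, g_{i} - (q-q^{-1})\, e_{i}X_{i+1}\cdot \frac{X^{\alpha}-s_{i}^{X}X^{\alpha}}{X_{i}-X_{i+1}},
\]
where $s_{i}^{X}$ swaps $X_{i}$ and $X_{i+1}$ only. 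Multiplying on the right by $t^{\beta}$ and using $g_{i}t^{\beta} = t^{s_{i}\beta}g_{i}$ yields, for any monomial $f = X^{\alpha}t^{\beta}$,
\[
g_{i}f = (\bar{s}_{i}f)\, g_{i} - (q-q^{-1})\, e_{i}X_{i+1}\cdot \frac{X^{\alpha}-s_{i}^{X}X^{\alpha}}{X_{i}-X_{i+1}}\cdot t^{\beta},
\]
where $\bar{s}_{i}$ denotes the diagonal action of $s_{i}$ on both families of variables. For $f \in P_{n}^{\mathbb{C}}(T)^{\mathfrak{S}_{n}}$ one has $\bar{s}_{i}f = f$, so $[g_{i},f]$ equals the sum of correction terms. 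Pairing the monomial indexed by $(\alpha,\beta)$ with that indexed by $(s_{i}\alpha,s_{i}\beta)$ (which carry the same coefficient by the diagonal $\mathfrak{S}_{n}$-invariance), their joint contribution factors out a multiple of $e_{i}(t^{\beta}-t^{s_{i}\beta})$. The decisive point is the relation $t_{i}e_{i}=t_{i+1}e_{i}$, an immediate consequence of the definition of $e_{i}$; this forces $e_{i}(t^{\beta}-t^{s_{i}\beta}) = 0$, each paired contribution cancels, and $[g_{i},f] = 0$.

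\textbf{The reverse inclusion by dimension count.} Since $Y_{r,n}^{d}$ is semisimple, $\dim Z(Y_{r,n}^{d})$ equals the number of isomorphism classes of simple $Y_{r,n}^{d}$-modules, which by Theorem \ref{inductions-theorem-111} coincides with the number of $r$-tuples of $d$-partitions of $n$. The image of $P_{n}^{\mathbb{C}}(T)$ in $Y_{r,n}^{d}$ is a commutative subalgebra acting diagonalizably on each simple module $\widehat{Y}^{(\mathrm{c},\bm{\lambda})}$ via the weight basis $\{w_{\mathcal{T}}\}$ from Theorem \ref{classifi-maintheorem}; by Lemma \ref{lemma-uniques}, distinct pairs $(\bm{\lambda},\mathcal{T})$ give distinct joint eigenvalue sequences of $(t_{1},\dots,t_{n},X_{1},\dots,X_{n})$, so $P_{n}^{\mathbb{C}}(T)$ splits as a product of copies of $\mathbb{C}$ indexed by such pairs. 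The diagonal $\mathfrak{S}_{n}$-action permutes these sequences; two pairs lie in the same orbit if and only if they share the same multiset of (component, content) pairs, which (under the semisimplicity hypothesis, ruling out accidental coincidences between different shapes) happens exactly when the underlying shapes $\bm{\lambda}$ agree. Therefore $\dim P_{n}^{\mathbb{C}}(T)^{\mathfrak{S}_{n}}$ equals the number of $r$-tuples of $d$-partitions of $n$, matching $\dim Z(Y_{r,n}^{d})$. Combined with the first inclusion, this forces equality.

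\textbf{Main difficulty.} The technical heart is the Bernstein--Lusztig identity together with the subsequent cancellation: one must carry out the induction for $g_{i}X^{\alpha}$ and keep very careful track of where $e_{i}$ sits, then exploit the collapsing relation $t_{i}e_{i} = t_{i+1}e_{i}$ to kill the remaining term. A secondary care point is the dimension calculation, where one must argue that semisimplicity genuinely prevents different $r$-tuples of $d$-partitions from producing overlapping joint spectra, so that $\mathfrak{S}_{n}$-orbits on $\mathrm{Spec}(P_{n}^{\mathbb{C}}(T))$ correspond bijectively to simple $Y_{r,n}^{d}$-modules.
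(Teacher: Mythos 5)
Your proposal is correct in substance but takes a genuinely different route from the paper: the paper disposes of this theorem in a single line by citing [CWa, Theorem 2.7], where the center is computed once and for all, whereas you give a self-contained argument in the Ram--Ramagge/Ariki--Koike style. Your Bernstein--Lusztig identity is the right one (it reduces to \eqref{gxxg} in the four generating cases and telescopes under multiplication of monomials, all factors commuting by \eqref{xyyx}), the collapsing relation $t_{i}e_{i}=t_{i+1}e_{i}$ does annihilate the paired correction terms, and the dimension count via Theorem \ref{inductions-theorem-111} and Lemma \ref{lemma-uniques} is sound; what your route buys is independence from [CWa] and an explicit mechanism (namely $e_{i}(t^{\beta}-t^{s_{i}\beta})=0$) for why the $t$-part of the Bernstein--Lusztig correction vanishes. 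Two points should be made explicit to close the argument. First, the paper's definition of $P_{n}^{\mathbb{C}}(T)^{\mathfrak{S}_{n}}$ only requires an element to equal its permuted expression inside $Y_{r,n}^{d}$, not coefficientwise, so before pairing $(\alpha,\beta)$ with $(s_{i}\alpha,s_{i}\beta)$ you should replace the chosen expression by its $\mathfrak{S}_{n}$-average to ensure $d_{\alpha,\beta}=d_{w\alpha,w\beta}$. Second, in the orbit count you must invoke the semisimplicity criterion (that $q$ is not a root of unity and $v_{i}\neq q^{2k}v_{j}$ for $i\neq j$ and $|k|<n$) to guarantee both that distinct pairs $(\bm{\lambda},\mathcal{T})$ have distinct joint $(X,t)$-spectra, so that $P_{n}^{\mathbb{C}}(T)$ really is a product of copies of $\mathbb{C}$ indexed by such pairs, and that distinct $r$-tuples of $d$-partitions have distinct multisets of (component, content) data, so that $\mathfrak{S}_{n}$-orbits biject with shapes. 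With these two remarks added, your proof is complete and constitutes a legitimate alternative to the paper's citation.
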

\begin{proof}
It follows from [CWa, Theorem 2.7].
\end{proof}

\subsection{Irreducible calibrated $\widehat{Y}_{r,n}^{p}$-modules}

Let us first recall the presentation of $(\mathbb{Z}/r\mathbb{Z})$-framed affine braid group $(\mathbb{Z}/r\mathbb{Z})\wr B_{n}^{\mathrm{aff}},$ which is given in [ChPA2, (6.5)]. It is generated by $X_{1}, g_{1},\ldots,g_{n-1},t_{1},\ldots,t_{n}$ with the relations:
\begin{equation}\label{rel-def-braidgroup1}\begin{array}{rclcl}
X_{1}g_{0}X_1g_{0}\hspace*{-7pt}&=&\hspace*{-7pt}g_{0}X_1g_{0}X_1,\\[0.1em]
X_1g_{i}\hspace*{-7pt}&=&\hspace*{-7pt}g_{i}X_{1} &&\mbox{for all $i=2,\ldots,n-1$,}  \\[0.1em]
g_ig_j\hspace*{-7pt}&=&\hspace*{-7pt}g_jg_i && \mbox{for all $i,j=1,\ldots,n-1$ such that $\vert i-j\vert \geq 2$,}\\[0.1em]
g_ig_{i+1}g_i\hspace*{-7pt}&=&\hspace*{-7pt}g_{i+1}g_ig_{i+1} && \mbox{for all $i=1,\ldots,n-2$,}\\[0.1em]
t_it_j\hspace*{-7pt}&=&\hspace*{-7pt}t_jt_i &&  \mbox{for all $i,j=1,\ldots,n$,}\\[0.1em]
g_it_j\hspace*{-7pt}&=&\hspace*{-7pt}t_{s_i(j)}g_i && \mbox{for all $i=1,\ldots,n-1$ and $j=1,\ldots,n$,}\\[0.1em]
t_i^r\hspace*{-7pt}&=&\hspace*{-7pt}1 && \mbox{for all $i=1,\ldots,n$,}\\[0.2em]
X_1t_{j}\hspace*{-7pt}&=&\hspace*{-7pt}t_{j}X_{1} && \mbox{for all $j=1,\ldots,n$.}
\end{array}
\end{equation}
We shall denote $(\mathbb{Z}/r\mathbb{Z})\wr B_{n}^{\mathrm{aff}}$ by $\mathcal{B}_{r,1,n}^{\infty}.$ Let $\kappa :\mathcal{B}_{r,1,n}^{\infty}\rightarrow \mathbb{Z}$ is the group homomorphism defined by $\kappa(X_1)=1,$ $\kappa(t_j)=\kappa(g_i)=0$ for $1\leq j\leq n$ and $1\leq i\leq n-1.$ Let $\mathcal{B}_{r,p,n}^{\infty}$ be the subgroup of $\mathcal{B}_{r,1,n}^{\infty}$, which is defined by
\[\mathcal{B}_{r,p,n}^{\infty}=\{b\in \mathcal{B}_{r,p,n}^{\infty}\:|\:\kappa(b)=0~(\mathrm{mod}~p)\}.\]
We define $X_{i+1} :=g_{i}X_{i}g_{i}$ for $1\leq i\leq n-1.$ Set $X^{\varepsilon_{i}}=X_{i}$ for $1\leq i\leq n,$ where $\varepsilon_{i}$ ($1\leq i\leq n$) is a basis of $\mathbb{R}^{n}.$ Set $L :=\sum_{i=1}^{n}\mathbb{Z}\varepsilon_{i}.$ For any $\lambda=\lambda_1\varepsilon_1+\cdots+\lambda_{n}\varepsilon_{n}\in L$, we set $X^{\lambda} :=X_{1}^{\lambda_1}\cdots X_{n}^{\lambda_{n}}.$ Let
\[Q=\sum_{i=2}^{n}\mathbb{Z}(\varepsilon_{i}-\varepsilon_{i-1})\quad\text{ and }\quad L_{p}=Q+\sum_{i=1}^{n}p\mathbb{Z}\varepsilon_{i}.\]
Then $\mathcal{B}_{r,p,n}^{\infty}$ be the subgroup of $\mathcal{B}_{r,1,n}^{\infty}$ generated by $X^{\lambda}$ with $\lambda\in L_{p},$ $g_{i}$ with $1\leq i\leq n-1,$ and $t_{j}$ with $1\leq j\leq n.$

Let $\widehat{Y}_{r,n}^{p}$ be the quotient of the group algebra $\mathbb{C}\mathcal{B}_{r,p,n}^{\infty}$ by the following relation:
\[g_{i}^{2}=1+(q-q^{-1})e_{i}g_{i}\quad\text{ for }1\leq i\leq n-1.\]
Thus, the following set
\[\{t_{1}^{a_1}\cdots t_{n}^{a_n}X^{\lambda}g_{w}\:|\:0\leq a_1,\ldots,a_{n}\leq r-1, \lambda\in L_{p}, w\in \mathfrak{S}_n\}\]
gives a basis of $\widehat{Y}_{r,n}^{p}.$

Let $\zeta=e^{2\pi i/p}$ be a primitive $p$-th root of unity. The following result gives another description of $\widehat{Y}_{r,n}^{p}$, which allows us to apply the Clifford theory developed in [RaR, Appendix].
\begin{theorem}\label{inductions-theorem-113}
The algebra automorphism $\sigma :\widehat{Y}_{r,n}\rightarrow \widehat{Y}_{r,n}$ defined by
\[\sigma(X_{1})=\zeta X_1,\quad \sigma(g_{i})=g_{i}\text{ for }1\leq i\leq n-1,\quad \sigma(t_{j})=t_{j}\text{ for }1\leq j\leq n\]
gives rise to an action of the group $\mathbb{Z}/p\mathbb{Z}=\{1,\sigma,\ldots,\sigma^{p-1}\}$ on $\widehat{Y}_{r,n}.$ Moreover, the set of fixed points of the $\mathbb{Z}/p\mathbb{Z}$-action is exactly $\widehat{Y}_{r,n}^{p}.$
\end{theorem}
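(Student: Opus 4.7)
The plan is to first show $\sigma$ extends to a well-defined algebra automorphism of order $p$, and then identify its fixed point subalgebra using the PBW-type basis of $\widehat{Y}_{r,n}$ that is (implicitly) behind the displayed basis for $\widehat{Y}_{r,n}^{p}$ in the excerpt.

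For well-definedness I would check that the prescribed assignment preserves every defining relation of $\widehat{Y}_{r,n}$. All relations in \eqref{rel-def-Y1} do not involve $X_{1}$, and the right-hand side of $g_{i}^{2}=1+(q-q^{-1})e_{i}g_{i}$ is a polynomial in the $g_{i}$ and $t_{j}$; so these relations are preserved trivially. Among the relations in \eqref{rel-def-Y2}, the braid-type relation $g_{1}X_{1}g_{1}X_{1}=X_{1}g_{1}X_{1}g_{1}$ becomes multiplied by $\zeta^{2}$ on each side, while $g_{i}X_{1}=X_{1}g_{i}$ for $i\geq 2$ and $t_{j}X_{1}=X_{1}t_{j}$ pick up a factor $\zeta$ on each side, so all are preserved once one declares $\sigma(X_{1}^{-1})=\zeta^{-1}X_{1}^{-1}$ (forced by $X_{1}X_{1}^{-1}=1$). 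This makes $\sigma$ a well-defined algebra homomorphism. Since $\sigma^{p}(X_{1})=\zeta^{p}X_{1}=X_{1}$ and the other generators are fixed, $\sigma^{p}=\mathrm{id}$; as $\zeta$ is a primitive $p$-th root of unity, $\sigma$ has order exactly $p$, which yields the $\mathbb{Z}/p\mathbb{Z}$-action claimed.

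Next I would compute the action of $\sigma$ on all the $X_{i}$'s. From $X_{i+1}=g_{i}X_{i}g_{i}$ and $\sigma(g_{i})=g_{i}$, a short induction gives $\sigma(X_{i})=\zeta X_{i}$ for every $1\leq i\leq n$. Consequently, for any $\lambda=\lambda_{1}\varepsilon_{1}+\cdots+\lambda_{n}\varepsilon_{n}\in L$ we have
\[\sigma(X^{\lambda})=\zeta^{\lambda_{1}+\cdots+\lambda_{n}}X^{\lambda}.\]
Since $\sigma$ fixes every $t_{j}$ and every $g_{w}$, on the PBW-type basis
\[\{t_{1}^{a_{1}}\cdots t_{n}^{a_{n}}X^{\lambda}g_{w}\mid 0\leq a_{i}\leq r-1,\ \lambda\in L,\ w\in\mathfrak{S}_{n}\}\]
of $\widehat{Y}_{r,n}$, every basis vector is a $\sigma$-eigenvector with eigenvalue $\zeta^{\lambda_{1}+\cdots+\lambda_{n}}$. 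Hence the fixed subspace is spanned by those basis vectors with $\lambda_{1}+\cdots+\lambda_{n}\equiv 0\pmod{p}$.

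Finally, from the definition $L_{p}=Q+\sum_{i=1}^{n}p\mathbb{Z}\varepsilon_{i}$, I would verify directly that $\lambda\in L_{p}$ if and only if $\lambda_{1}+\cdots+\lambda_{n}\in p\mathbb{Z}$: elements of $Q$ have zero coordinate sum, and adding $\sum p\mathbb{Z}\varepsilon_{i}$ just shifts the sum by any multiple of $p$. Comparing with the basis of $\widehat{Y}_{r,n}^{p}$ displayed right before the theorem statement then shows that $(\widehat{Y}_{r,n})^{\mathbb{Z}/p\mathbb{Z}}=\widehat{Y}_{r,n}^{p}$. The only real obstacle is confirming the PBW-type basis of $\widehat{Y}_{r,n}$ in the form used above (so that the $\sigma$-eigenvalue depends only on $\sum\lambda_{i}$ mod $p$); once this basis is accepted, the rest of the argument is simply a decomposition into $\sigma$-eigenspaces, and the identification with $\widehat{Y}_{r,n}^{p}$ is a direct comparison of spanning sets.
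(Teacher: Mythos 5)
Your proposal is correct, and it supplies exactly the argument the paper leaves implicit: the paper states this theorem without proof, and the standard route (also the one used for the analogous statement for affine Hecke algebras in [RaR, Appendix]) is precisely your eigenspace decomposition with respect to the PBW-type basis $\{t_{1}^{a_1}\cdots t_{n}^{a_n}X^{\lambda}g_{w}\}$, together with the observation that $L_{p}=\{\lambda\in L\mid \lambda_{1}+\cdots+\lambda_{n}\equiv 0\ (\mathrm{mod}\ p)\}$. The one dependency you correctly flag, the PBW basis of $\widehat{Y}_{r,n}$ itself, is established in [ChPA2]/[CWa] and is already being assumed by the paper when it displays the basis of $\widehat{Y}_{r,n}^{p}$, so nothing further is needed.
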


If $(\mathrm{c},\bm{\lambda}/\bm{\mu})$ is a placed $r$-skew shape with $n$ boxes and $\sigma\in \mathbb{Z}/p\mathbb{Z}$, we define
\[\sigma(\mathrm{c},\bm{\lambda}/\bm{\mu})=(\mathrm{c}-\alpha i/p,\bm{\lambda}/\bm{\mu}),\]
where $\alpha=2\pi/\mathrm{ln}(q^{2})$ and $\mathrm{c}-\alpha i/p$ denotes the content function defined by $(\mathrm{c}-\alpha i/p)(b)=\mathrm{c}(b)-\alpha i/p$ for all boxes $b\in \bm{\lambda}/\bm{\mu}.$ Let $K_{(\mathrm{c},\bm{\lambda}/\bm{\mu})}$ be the stabilizer of $(\mathrm{c},\bm{\lambda}/\bm{\mu})$ under the action of $\mathbb{Z}/p\mathbb{Z}$, and $k$ be the smallest integer between $1$ and $p$ such that
\[q^{2\sigma^{k}(\mathrm{c})(b)}=q^{2\mathrm{c}(b)}\quad\text{ for all }b\in \bm{\lambda}/\bm{\mu}.\]

Then $K_{(\mathrm{c},\bm{\lambda}/\bm{\mu})}=\{\sigma^{kl}\:|\:0\leq l\leq |K_{(\mathrm{c},\bm{\lambda}/\bm{\mu})}|-1\}.$ We define the element
\begin{equation}\label{calibrated-repre-1}
p_{j}=\sum_{l=0}^{|K_{(\mathrm{c},\bm{\lambda}/\bm{\mu})}|-1}(\zeta^{-jk})^{l}\sigma^{kl}
\end{equation}
is the minimal idempotent of the group algebra $\mathbb{C}K_{(\mathrm{c},\bm{\lambda}/\bm{\mu})}$ corresponding to the irreducible character $\eta_{j},$ where $\eta_{j}$ is given explicitly by
\[\eta_{j} :K_{(\mathrm{c},\bm{\lambda}/\bm{\mu})}\rightarrow \mathbb{C},\quad \sigma^{k}\mapsto \zeta^{jk}.\]

It follows from a standard double centralizer result that, as a $(\widehat{Y}_{r,n}^{p}, K_{(\mathrm{c},\bm{\lambda}/\bm{\mu})})$-bimodule, we have
\begin{equation}\label{calibrated-repre-2}
\widehat{Y}^{(\mathrm{c},\bm{\lambda}/\bm{\mu})}\cong \bigoplus_{j=0}^{|K_{(\mathrm{c},\bm{\lambda}/\bm{\mu})}|-1}\widehat{Y}^{(\mathrm{c},\bm{\lambda}/\bm{\mu},j)}\otimes K^{(j)},
\end{equation}
where $\widehat{Y}^{(\mathrm{c},\bm{\lambda}/\bm{\mu},j)}=p_{j}\widehat{Y}^{(\mathrm{c},\bm{\lambda}/\bm{\mu})}$ and $K^{(j)}$ is the irreducible $K_{(\mathrm{c},\bm{\lambda}/\bm{\mu})}$-module with character $\eta_{j}.$ Applying [RaR, Theorem A.13] we get the following result.

\begin{theorem}\label{inductions-theorem-cal-1114}
Let $(\mathrm{c},\bm{\lambda}/\bm{\mu})$ be a placed $r$-skew shape with $n$ boxes and let $\widehat{Y}^{(\mathrm{c},\bm{\lambda}/\bm{\mu})}$ be the irreducible calibrated $\widehat{Y}_{r,n}$-module, which is constructed in Theorem \ref{classifi-maintheorem}. Let $K_{(\mathrm{c},\bm{\lambda}/\bm{\mu})}$ be the stabilizer of $(\mathrm{c},\bm{\lambda}/\bm{\mu})$ under the action of $\mathbb{Z}/p\mathbb{Z}.$ If $p_{j}$ is the minimal idempotent of $K_{(\mathrm{c},\bm{\lambda}/\bm{\mu})}$ defined in \eqref{calibrated-repre-1}, then $\widehat{Y}^{(\mathrm{c},\bm{\lambda}/\bm{\mu},j)}=p_{j}\widehat{Y}^{(\mathrm{c},\bm{\lambda}/\bm{\mu})}$ is an irreducible calibrated $\widehat{Y}_{r,n}^{p}$-module. Moreover, all the irreducible calibrated $\widehat{Y}_{r,n}^{p}$-modules can be obtained in this way.
\end{theorem}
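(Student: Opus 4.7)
The plan is to deduce this theorem from the Clifford-theoretic framework of [RaR, Appendix] by way of Theorem \ref{inductions-theorem-113}, which realizes $\widehat{Y}_{r,n}^p$ as the fixed-point subalgebra $(\widehat{Y}_{r,n})^{\mathbb{Z}/p\mathbb{Z}}$ under the $\sigma$-action. The first step is to determine how $\sigma$ acts on the isomorphism classes of irreducible calibrated $\widehat{Y}_{r,n}$-modules. Inspecting the formulas \eqref{classi-theorem1}--\eqref{classi-theorem4} of Theorem \ref{classifi-maintheorem}, twisting $\widehat{Y}^{(\mathrm{c},\bm{\lambda}/\bm{\mu})}$ by $\sigma$ preserves the $t_j$-action and the basis $\{w_\mathcal{T}\}$ while multiplying each $X_j$-eigenvalue by $\zeta^{\pm 1}$. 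Since $q^{2(\mathrm{c}(\mathcal{T}|j) - \alpha i/p)} = \zeta^{-1} q^{2\mathrm{c}(\mathcal{T}|j)}$ by the choice $\alpha = 2\pi/\ln(q^2)$, the twisted module is exactly $\widehat{Y}^{\sigma(\mathrm{c},\bm{\lambda}/\bm{\mu})}$, and the group $K_{(\mathrm{c},\bm{\lambda}/\bm{\mu})}$ defined before the statement coincides with the Clifford-theoretic isotropy subgroup of the pair $(\mathrm{c},\bm{\lambda}/\bm{\mu})$.

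Because $\mathbb{Z}/p\mathbb{Z}$ is cyclic, each stabilizer $K_{(\mathrm{c},\bm{\lambda}/\bm{\mu})}$ is cyclic with commutative split semisimple group algebra, so the $2$-cocycle appearing in [RaR, Theorem A.13] is trivial, and the decomposition \eqref{calibrated-repre-2} is a genuine $(\widehat{Y}_{r,n}^p, K_{(\mathrm{c},\bm{\lambda}/\bm{\mu})})$-bimodule decomposition. Applying [RaR, Theorem A.13], each summand $\widehat{Y}^{(\mathrm{c},\bm{\lambda}/\bm{\mu},j)} = p_j \widehat{Y}^{(\mathrm{c},\bm{\lambda}/\bm{\mu})}$ is irreducible over $\widehat{Y}_{r,n}^p$, and two such modules are isomorphic if and only if the underlying placed skew shapes lie in the same $\mathbb{Z}/p\mathbb{Z}$-orbit and the indices $j$ agree. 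The calibrated property is inherited because $\sigma$ fixes every $t_i$ and multiplies every $X_i$ by $\zeta$, so $p_j$ commutes with the commutative subalgebra of $\widehat{Y}_{r,n}^p$ generated by the $t_i$ and by $X^\lambda$ with $\lambda \in L_p$; hence the weight decomposition of $\widehat{Y}^{(\mathrm{c},\bm{\lambda}/\bm{\mu})}$ restricts to a weight decomposition of each $\widehat{Y}^{(\mathrm{c},\bm{\lambda}/\bm{\mu},j)}$, supplying it with a semisimple weight basis.

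For the converse, I would begin with an arbitrary irreducible calibrated $\widehat{Y}_{r,n}^p$-module $N$ and form the induced module $\widehat{Y}_{r,n} \otimes_{\widehat{Y}_{r,n}^p} N$. By the other half of the Clifford correspondence this induced module decomposes into irreducibles that are transitively permuted by $\mathbb{Z}/p\mathbb{Z}$, and by Theorem \ref{classifi-maintheorem} each summand must be of the form $\widehat{Y}^{(\mathrm{c},\bm{\lambda}/\bm{\mu})}$; Frobenius reciprocity then identifies $N$ with some $\widehat{Y}^{(\mathrm{c},\bm{\lambda}/\bm{\mu},j)}$. The principal technical obstacle will be verifying that the induced module is indeed calibrated as a $\widehat{Y}_{r,n}$-module whenever $N$ is calibrated over $\widehat{Y}_{r,n}^p$; for this I would use that $\widehat{Y}_{r,n}$ is free of rank $p$ over $\widehat{Y}_{r,n}^p$ with basis $\{1, X_1, \ldots, X_1^{p-1}\}$, so that on the induced module $X_1$ satisfies a polynomial of the form $t^p - c$ on each $X_1^p$-eigenspace of $N$, with $c \neq 0$ by invertibility of $X_1^p \in \widehat{Y}_{r,n}^p$. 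Such a polynomial has $p$ distinct roots, hence $X_1$ is diagonalizable on the induction, and the analogous argument for the other $X_i$ (using the relations \eqref{giXj} and the invertibility of $X_i^p$) completes the verification.
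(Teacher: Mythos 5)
Your proposal is correct and follows essentially the same route as the paper: the paper's entire argument is the bimodule decomposition \eqref{calibrated-repre-2} together with a citation of [RaR, Theorem A.13], i.e.\ exactly the Clifford theory for the fixed-point subalgebra $\widehat{Y}_{r,n}^{p}=(\widehat{Y}_{r,n})^{\mathbb{Z}/p\mathbb{Z}}$ of Theorem \ref{inductions-theorem-113}. Your write-up simply supplies the details the paper leaves implicit (the identification of the $\sigma$-twist of $\widehat{Y}^{(\mathrm{c},\bm{\lambda}/\bm{\mu})}$ with $\widehat{Y}^{\sigma^{\pm1}(\mathrm{c},\bm{\lambda}/\bm{\mu})}$, triviality of the cocycle for a cyclic stabilizer, and the check that induction preserves the calibrated property via diagonalizability of $X_{i}$ from $t^{p}-c$), all of which are consistent with the paper's intended argument.
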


\subsection{Simple $Y_{r,n}^{d,p}$-modules}
Let us return to $\S4.2$ and consider a particular case. Assume that $p$ divides $d$ and $e=d/p.$ For fixed $x_{0},\ldots,x_{e-1}\in \mathbb{C}^{*}$, we define, for $1\leq j\leq d,$
\[v_{j}=\zeta^{k}x_{l}\quad \text{ if }j-1=lp+k\text{ with unique }0\leq l\leq e-1\text{ and }0\leq k\leq p-1.\]
Then we have \[(X_1-v_1)(X_1-v_{2})\cdots(X_1-v_{d})=(g_{1}^{p}-x_{0}^{p})(g_{1}^{p}-x_{1}^{p})\cdots(g_{1}^{p}-x_{e-1}^{p}).\]

We define the algebra $Y_{r,n}^{d,p}$ as the subalgebra of $Y_{r,n}^{d}$ generated by the elements:
\[a_{0}=X_{1}^{p},\quad a_{1}=X_{1}^{-1}g_{1}X_{1},\quad a_{i}=g_{i-1}\text{ for }2\leq i\leq n,\quad t_{j}\text{ for }1\leq j\leq n.\]

We have a natural surjective homomorphism from $\widehat{Y}_{r,n}^{p}$ to $Y_{r,n}^{d,p}.$ Assume that $\bm{\lambda}=((\lambda_{1}^{(1)},\ldots,\lambda_{d}^{(1)}),\ldots,(\lambda_{1}^{(r)},\ldots,\lambda_{d}^{(r)}))$ is an $r$-tuple of $d$-partitions with $n$ boxes. We shall regard each $d$-partition $(\lambda_{1}^{(h)},\ldots,\lambda_{d}^{(h)})$ as $e$ groups of $p$-partitions, so that we can write
\[\bm{\lambda}=(\lambda_{(l,k)}^{(h)})\quad \text{ for }1\leq h\leq r, 0\leq l\leq e-1\text{ and }0\leq k\leq p-1. \]
Moreover, we define the content of a box $b$ of $\bm{\lambda}$ as \[\mathrm{ct}(b)=\zeta^{k}x_{l}q^{2(j-i)}\quad \text{ if }b\text{ is in position }(i,j)\text{ in }\lambda_{(l,k)}^{(h)}.\]

For each $(r, e, p)$-partition $\bm{\lambda}=(\lambda_{(l,k)}^{(h)})$ and $\sigma\in \mathbb{Z}/p\mathbb{Z}$, we define an action of $\sigma$ on $\bm{\lambda}$ as $\sigma(\bm{\lambda})=(\lambda_{(l,k-1)}^{(h)})$ such that $\mathrm{ct}(\sigma(b))=\zeta^{-1}\mathrm{ct}(b).$ Let $K_{\bm{\lambda}}$ be the stabilizer of $\bm{\lambda}$ under the action of $\mathbb{Z}/p\mathbb{Z}$ and $f_{\bm{\lambda}}$ is the smallest integer between $1$ and $p$ such that $\sigma^{f_{\bm{\lambda}}}(\bm{\lambda})=\bm{\lambda}.$

By [ChPA2, Proposition 4.6], the irreducible representations of $Y_{r,n}^{d}$ are indexed by $r$-tuple of $d$-partitions. For each such $\bm{\lambda},$ we denote by $V^{\bm{\lambda}}$ the corresponding irreducible representations of $Y_{r,n}^{d}.$

It follows from a standard double centralizer result that, as a $(Y_{r,n}^{d}, K_{\bm{\lambda}})$-bimodule, we have
\[V^{\bm{\lambda}}\cong \bigoplus_{j=0}^{|K_{\bm{\lambda}}|-1}V^{\bm{\lambda},j}\otimes Z^{(j)},\]
where $V^{\bm{\lambda},j}$ is a $Y_{r,n}^{d,p}$-module and $Z^{(j)}$ is the irreducible $K_{\bm{\lambda}}$-module with character $\eta_{j},$ which is given explicitly by
\[\eta_{j} :K_{\bm{\lambda}}\rightarrow \mathbb{C},\quad \sigma^{f_{\bm{\lambda}}}\mapsto \zeta^{jf_{\bm{\lambda}}}.\]

Applying [RaR, Theorem A.13] we get the following result.

\begin{theorem}\label{inductions-theorem-cal-1114113}
With notations as above, the modules $V^{\bm{\lambda},j}$, where $\bm{\lambda}$ runs over all $r$-tuple of $d$-partitions and $0\leq j\leq |K_{\bm{\lambda}}|-1,$ form a complete set of non-isomorphic irreducible modules of $Y_{r,n}^{d,p}.$
\end{theorem}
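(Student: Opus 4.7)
The plan is to apply the Clifford-theoretic machinery of [RaR, Theorem A.13] to the cyclotomic quotient $Y_{r,n}^{d}$, in direct analogy with the argument indicated for Theorem \ref{inductions-theorem-cal-1114}. The backbone of such an application is the claim that the automorphism $\sigma$ of $\widehat{Y}_{r,n}$ from Theorem \ref{inductions-theorem-113} descends to $Y_{r,n}^{d}$ and that $Y_{r,n}^{d,p}$ is precisely the fixed-point subalgebra $(Y_{r,n}^{d})^{\mathbb{Z}/p\mathbb{Z}}$; once this is in place, the rest is essentially formal.

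To descend $\sigma$, I would first check that the two-sided ideal $\mathcal{J}_{d}=\langle f_{1}\rangle$ is $\sigma$-stable. Writing
\[\sigma(f_{1})=(\zeta X_{1}-v_{1})\cdots(\zeta X_{1}-v_{d})=\zeta^{d}\prod_{j=1}^{d}(X_{1}-\zeta^{-1}v_{j}),\]
the specific parametrization $v_{j}=\zeta^{k}x_{l}$ with $j-1=lp+k$ makes the multiset $\{\zeta^{-1}v_{j}\}_{j=1}^{d}$ coincide with $\{v_{j}\}_{j=1}^{d}$; hence $\sigma(f_{1})=\zeta^{d}f_{1}$ and $\sigma(\mathcal{J}_{d})=\mathcal{J}_{d}$. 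The equality $Y_{r,n}^{d,p}=(Y_{r,n}^{d})^{\mathbb{Z}/p\mathbb{Z}}$ then proceeds exactly as in Theorem \ref{inductions-theorem-113}: all of the listed generators $a_{0}=X_{1}^{p}$, $a_{1}=X_{1}^{-1}g_{1}X_{1}$, $a_{i}=g_{i-1}$ and $t_{j}$ are visibly $\sigma$-invariant, and using the PBW-type basis $\{t^{a}X^{\lambda}g_{w}\}$ of $Y_{r,n}^{d}$ together with the $\kappa$-grading on $\widehat{Y}_{r,n}$, the fixed elements are exactly those supported on the sublattice $L_{p}\subseteq L$, which lie in the subalgebra generated by the $a_{i}$ and $t_{j}$.

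With $Y_{r,n}^{d,p}=(Y_{r,n}^{d})^{\mathbb{Z}/p\mathbb{Z}}$ established, I would apply Clifford theory. By [ChPA2, Proposition 4.6], the irreducibles $V^{\bm{\lambda}}$ of $Y_{r,n}^{d}$ are parameterized by $r$-tuples of $d$-partitions; the rule $\mathrm{ct}(\sigma(b))=\zeta^{-1}\mathrm{ct}(b)$ on contents shows that twisting by $\sigma$ carries $V^{\bm{\lambda}}$ to $V^{\sigma(\bm{\lambda})}$, so the stabilizer of the isomorphism class of $V^{\bm{\lambda}}$ is exactly $K_{\bm{\lambda}}$. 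Since $K_{\bm{\lambda}}$ is cyclic, no $2$-cocycle obstruction arises; a standard double centralizer argument then yields the $(Y_{r,n}^{d,p},K_{\bm{\lambda}})$-bimodule decomposition stated above, and [RaR, Theorem A.13] supplies irreducibility of each $V^{\bm{\lambda},j}=p_{j}V^{\bm{\lambda}}$, the fact that every irreducible $Y_{r,n}^{d,p}$-module arises in this way, and the isomorphism classification in terms of $\mathbb{Z}/p\mathbb{Z}$-orbits together with the choice of character $\eta_{j}$.

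I expect the main obstacle to be the identification $Y_{r,n}^{d,p}=(Y_{r,n}^{d})^{\mathbb{Z}/p\mathbb{Z}}$: one inclusion is immediate from $\sigma$-invariance of the generators, but the reverse inclusion needs a careful basis/grading bookkeeping matching the monomials in $a_{0},\ldots,a_{n},t_{1},\ldots,t_{n}$ with the $L_{p}$-supported part of the PBW basis; a secondary, but genuinely routine, point is checking that the twist $\sigma\cdot V^{\bm{\lambda}}$ coincides with $V^{\sigma(\bm{\lambda})}$, which reduces to comparing $X_{i}$-eigenvalues against the content-shift rule.
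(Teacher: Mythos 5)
Your proposal is correct and follows essentially the same route as the paper, which simply sets up the $(Y_{r,n}^{d},K_{\bm{\lambda}})$-bimodule decomposition via a standard double centralizer argument and cites [RaR, Theorem A.13]. The additional details you supply — the $\sigma$-stability of $\mathcal{J}_{d}$ coming from the choice $v_{j}=\zeta^{k}x_{l}$, the identification $Y_{r,n}^{d,p}=(Y_{r,n}^{d})^{\mathbb{Z}/p\mathbb{Z}}$, and the compatibility of the $\sigma$-twist with the content-shift rule on $\bm{\lambda}$ — are exactly the verifications the paper leaves implicit, and they are carried out correctly.
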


\section{Appendix. Completely splittable representations of degenerate affine Yokonuma-Hecke algebras}

In the appendix, we give a complete classification and construction of irreducible completely splittable representations of degenerate affine Yokonuma-Hecke algebras $D_{r,n}$ and the reflection groups of type $G(r,1,n)$ over an algebraically closed field $\mathbb{K}$ of characteristic $p> 0$ such that $p$ does not divide $r.$ Let $I=\mathbb{Z}\cdot 1\cong \mathbb{Z}_{p}.$

\begin{definition}
The degenerate affine Yokonuma-Hecke algebra, denoted by $D_{r,n}$, is an associative $\mathbb{K}$-algebra generated by the elements $t_{1},\ldots,t_{n},f_{1},\ldots,f_{n-1}, x_1,\ldots, x_{n}$ in which the generators $t_{1},\ldots,t_{n},f_{1},$ $\ldots,f_{n-1}$ satisfy the following relations:
\begin{equation}\label{rel-def-Y1}\begin{array}{rclcl}
f_if_j\hspace*{-7pt}&=&\hspace*{-7pt}f_jf_i && \mbox{for all $i,j=1,\ldots,n-1$ such that $\vert i-j\vert \geq 2$;}\\[0.1em]
f_if_{i+1}f_i\hspace*{-7pt}&=&\hspace*{-7pt}f_{i+1}f_if_{i+1} && \mbox{for all $i=1,\ldots,n-2$;}\\[0.1em]
t_it_j\hspace*{-7pt}&=&\hspace*{-7pt}t_jt_i &&  \mbox{for all $i,j=1,\ldots,n$;}\\[0.1em]
f_it_j\hspace*{-7pt}&=&\hspace*{-7pt}t_{s_i(j)}f_i && \mbox{for all $i=1,\ldots,n-1$ and $j=1,\ldots,n$;}\\[0.1em]
t_i^r\hspace*{-7pt}&=&\hspace*{-7pt}1 && \mbox{for all $i=1,\ldots,n$;}\\[0.2em]
f_{i}^{2}\hspace*{-7pt}&=&\hspace*{-7pt}1 && \mbox{for all $i=1,\ldots,n-1$,}
\end{array}
\end{equation}
together with the following relations concerning the generators $x_1,\ldots, x_{n}$:
\begin{equation}\label{rel-def-Y2}\begin{array}{rclcl}
x_{i}x_{j}\hspace*{-7pt}&=&\hspace*{-7pt}x_{j}x_{i};\\[0.1em]
f_{i}x_{i+1}\hspace*{-7pt}&=&\hspace*{-7pt}x_{i}f_{i}+e_{i};\\[0.1em]
f_{i}x_{j}\hspace*{-7pt}&=&\hspace*{-7pt}x_{j}f_{i} \qquad \qquad \quad\mbox{for all $j\neq i, i+1$;}\\[0.1em]
t_{j}x_{i}\hspace*{-7pt}&=&\hspace*{-7pt}x_{i}t_{j} \hspace{0.07cm}\qquad \qquad\quad\mbox{for all $i, j=1,\ldots,n$.}
\end{array}
\end{equation}
\end{definition}

\begin{remark}\label{rem-degenerate-YH}
The degenerate affine Yokonuma-Hecke algebra $D_{r,n}$ is in fact a special case of the wreath Hecke algebra $\mathcal{H}_{n}(G)$ defined in [WaW, Definition 2.4] when $G=C_{r}$ is the cyclic group of order $r;$ see also [RaS].
\end{remark}

We always consider the category $\mathrm{Rep}_{I}D_{r,n}$ of integral representations of $D_{r,n},$ which consists of the finite dimensional modules on which the generators $x_{i}$'s have eigenvalues lying in $I.$ Recall that $J=\{1,2,\ldots,r\}.$

\begin{definition}
For $M\in \mathrm{Rep}_{I}D_{r,n}$ and $(\alpha,j)=((\alpha_1,\ldots,\alpha_{n}),(j_1,\ldots,j_{n}))\in I^{n}\times J^{n},$ we define the $(\alpha,j)$-weight space of $M$ by
\begin{equation}\label{degenerate-M-J-J}
M_{(\alpha, j)}=\{m\in M\:|\:(X_{i}-\alpha_{i})^{N}m=(t_{i}-\zeta_{j_{i}})m=0\text{ for all }1\leq i\leq n\text{ and }N\gg 0\}.
\end{equation}
\end{definition}
By the Cayley-Hamilton theorem, we have $M=\bigoplus_{(\alpha,j)\in I^{n}\times J^{n}}M_{(\alpha, j)}.$

\begin{definition}
A module $M\in \mathrm{Rep}_{I}D_{r,n}$ is called completely splittable if the $x_{i}$'s act semisimply on it.
\end{definition}

The following lemma follows easily from [WaW, Theorem 5.16] and [Ru, Theorem 2.13].
\begin{lemma}\label{generate-inductions-lemma-1}
Assume that $M\in \mathrm{Rep}_{I}D_{r,n}$ is irreducible and completely splittable. Then for all weights $(\alpha,j)=((\alpha_1,\ldots,\alpha_{n}),(j_1,\ldots,j_{n}))\in I^{n}\times J^{n}$ of $M$ and all $1\leq i< n,$ we have $\alpha_{i}\neq\alpha_{i+1}$ whenever $j_{i}=j_{i+1}.$
\end{lemma}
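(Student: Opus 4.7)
The plan is to reduce to a rank two situation and invoke Ruff's dichotomy [Ru, Theorem~2.13]; an alternative, cleaner route is to cite [WaW, Theorem~5.16] directly through the identification in Remark~\ref{rem-degenerate-YH}. First I would fix a weight vector $v\in M_{(\alpha,j)}$ and observe that when $j_i=j_{i+1}$ the idempotent $e_i=\frac{1}{r}\sum_{s=0}^{r-1}t_i^{s}t_{i+1}^{-s}$ acts as the identity on $v$, since each summand acts by $\zeta_{j_i}^{\,s}\zeta_{j_{i+1}}^{-s}=1$. Because $e_i$ commutes with $f_i$ (the relation $e_if_i=f_ie_i$ is an immediate consequence of the Yokonuma-type relations) and with $x_i$, $x_{i+1}$ (the $t_k$ commute with all $x_\ell$), the subspace $N=\{m\in M\mid e_im=m\}$ is stable under the subalgebra $A\subseteq D_{r,n}$ generated by $f_i, x_i, x_{i+1}$ and contains $v$.

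Next I would note that on $N$ the relation $f_ix_{i+1}=x_if_i+e_i$ collapses to $f_ix_{i+1}=x_if_i+1$, and, together with $f_i^{2}=1$ and $x_ix_{i+1}=x_{i+1}x_i$, these are precisely the defining relations of the rank two degenerate affine Hecke algebra $H_2$. Thus $A|_N$ is a quotient of $H_2$, and the $H_2$-submodule $W\subseteq N$ generated by $v$ is a finite dimensional $H_2$-module. By the complete splittability of $M$ the operators $x_i$ and $x_{i+1}$ act semisimply on $W$, so $W$ is itself completely splittable and admits a weight vector of weight $(\alpha_i,\alpha_{i+1})$ for $(x_i,x_{i+1})$.

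Finally I would invoke [Ru, Theorem~2.13], which asserts that a completely splittable module over $H_2$ cannot carry a weight with two equal consecutive $x$-eigenvalues; this forces $\alpha_i\neq\alpha_{i+1}$, completing the argument. The main delicacy is to justify the stability of $N$ under $A$ and the transfer of complete splittability to $W$; both follow from the commutation relations listed above, but they are the only nontrivial bookkeeping in the proof. A more streamlined alternative is to identify $D_{r,n}$ with the wreath Hecke algebra $\mathcal{H}_n(C_r)$ via Remark~\ref{rem-degenerate-YH} and appeal to [WaW, Theorem~5.16] in a single step, which yields the same conclusion without the rank two reduction.
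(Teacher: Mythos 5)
Your proposal is correct and follows essentially the same route as the paper, which simply asserts that the lemma ``follows easily from [WaW, Theorem 5.16] and [Ru, Theorem 2.13]'' without further detail; your reduction via the idempotent $e_i$ acting as the identity when $j_i=j_{i+1}$, so that the relations collapse to those of the rank two degenerate affine Hecke algebra, is exactly the intended mechanism. The only cosmetic caveat is that [Ru, Theorem 2.13] is stated for irreducible modules while your $W$ need not be irreducible, but the underlying Jordan-block argument uses only semisimplicity of $x_i$, so nothing is lost.
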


\begin{proposition}\label{generate-inductions-lemma-2}
Assume that $M\in \mathrm{Rep}_{I}D_{r,n}$ is irreducible and completely splittable. Let $(\alpha,j)=((\alpha_1,\ldots,\alpha_{n}),(j_1,\ldots,j_{n}))\in I^{n}\times J^{n}$ be a weight of $M$, and let $w_{(\alpha,j)}$ be a nonzero weight vector of weight $(\alpha,j)$. Then for all $1\leq i< n,$ we have

$(\mathrm{i})$ If $j_{i}\neq j_{i+1},$ $w_{(s_{i}\alpha,s_{i}j)} :=f_{i}w_{(\alpha,j)}$ is a weight vector of weight $(s_{i}\alpha,s_{i}j)$, and it is not proportional to $w_{(\alpha,j)}.$

$(\mathrm{ii})$ If $j_{i}=j_{i+1}$ and $\alpha_{i+1}=\alpha_{i}\pm 1,$ $f_{i}w_{(\alpha,j)}=\pm w_{(\alpha,j)}.$

$(\mathrm{iii})$ If $j_{i}=j_{i+1}$ and $\alpha_{i+1}\neq\alpha_{i}\pm 1,$ then
\[w_{(s_{i}\alpha,j)} :=\Big(f_{i}-\frac{1}{\alpha_{i+1}-\alpha_{i}}\Big)w_{(\alpha,j)}\]
is a weight vector of weight $(s_{i}\alpha,j).$ The elements $x_{i}, x_{i+1}, f_{i}$ act on the basis $\{w_{(\alpha,j)}, w_{(s_{i}\alpha,j)}\}$ in the following way$:$
\[x_{i}=\left(\hspace{-1.5mm}
  \begin{array}{cc}
    \alpha_{i}  & 0\\
    0   &\alpha_{i+1}
\end{array}
\hspace{-1.5mm}\right),\quad x_{i+1}=\left(\hspace{-1.5mm}
  \begin{array}{cc}
    \alpha_{i+1}  & 0\\
    0   &\alpha_{i}
\end{array}
\hspace{-1.5mm}\right),\quad f_{i}=\left(\hspace{-1.5mm}
  \begin{array}{cc}
    (\alpha_{i+1}-\alpha_{i})^{-1}  & 1-(\alpha_{i+1}-\alpha_{i})^{-2}\\
    1   &(\alpha_{i}-\alpha_{i+1})^{-1}
\end{array}
\hspace{-1.5mm}\right).\]
\end{proposition}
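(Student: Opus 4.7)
The plan is to verify each of the three cases by direct computation from the defining relations of $D_{r,n}$, together with one structural input from the classification that is needed only for part $(\mathrm{ii})$. Two preliminary observations will organize the whole calculation. First, on the weight space $M_{(\alpha,j)}$ the idempotent $e_i=\tfrac{1}{r}\sum_{s=0}^{r-1}t_i^st_{i+1}^{-s}$ acts as the scalar $\tfrac{1}{r}\sum_{s=0}^{r-1}(\zeta_{j_i}\zeta_{j_{i+1}}^{-1})^s$, which equals $1$ if $j_i=j_{i+1}$ and $0$ otherwise. Second, multiplying the relation $f_ix_{i+1}=x_if_i+e_i$ on both sides by $f_i$ and using $f_i^2=1$ together with $f_ie_i=e_if_i$ (the latter from the $t_i\leftrightarrow t_{i+1}$ symmetry of $e_i$) yields the companion relation $f_ix_i=x_{i+1}f_i-e_i$.

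I would first handle parts $(\mathrm{i})$ and $(\mathrm{iii})$, which are pure computation. For $(\mathrm{i})$, the hypothesis $j_i\neq j_{i+1}$ gives $e_iw_{(\alpha,j)}=0$, so the mixed $x$-relations collapse to $x_i(f_iw_{(\alpha,j)})=\alpha_{i+1}f_iw_{(\alpha,j)}$ and $x_{i+1}(f_iw_{(\alpha,j)})=\alpha_if_iw_{(\alpha,j)}$; combined with $f_it_k=t_{s_i(k)}f_i$ and $f_ix_k=x_kf_i$ for $k\neq i,i+1$, this identifies $f_iw_{(\alpha,j)}$ as a weight vector of weight $(s_i\alpha,s_ij)$. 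It is non-zero because $f_i^2=1$, and not proportional to $w_{(\alpha,j)}$ because $t_i$ acts by $\zeta_{j_{i+1}}$ on $f_iw_{(\alpha,j)}$ but by $\zeta_{j_i}$ on $w_{(\alpha,j)}$, and $\zeta_{j_i}\neq\zeta_{j_{i+1}}$. For $(\mathrm{iii})$, I set $c:=1/(\alpha_{i+1}-\alpha_i)$ and $v:=(f_i-c)w_{(\alpha,j)}$; now $e_iw_{(\alpha,j)}=w_{(\alpha,j)}$, and the mixed relations read $x_i(f_iw_{(\alpha,j)})=\alpha_{i+1}f_iw_{(\alpha,j)}-w_{(\alpha,j)}$ and $x_{i+1}(f_iw_{(\alpha,j)})=\alpha_if_iw_{(\alpha,j)}+w_{(\alpha,j)}$. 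A short calculation then gives $x_iv=\alpha_{i+1}v$ and $x_{i+1}v=\alpha_iv$, so $v\in M_{(s_i\alpha,j)}$. Applying the intertwiner a second time (now with coefficient $-c$ appropriate to the new weight) produces $\tau_i^2w_{(\alpha,j)}=(1-c^2)w_{(\alpha,j)}$; since $c^2\neq 1$ in case $(\mathrm{iii})$, this forces $v\neq 0$. The claimed matrices are then read off in the basis $\{w_{(\alpha,j)},v\}$: the $x$-matrices from the eigenvalue calculations, and the $f_i$-matrix from $f_iw_{(\alpha,j)}=cw_{(\alpha,j)}+v$ together with $f_iv=f_i^2w_{(\alpha,j)}-cf_iw_{(\alpha,j)}=(1-c^2)w_{(\alpha,j)}-cv$.

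The main obstacle is part $(\mathrm{ii})$. The same identity $\tau_i^2w_{(\alpha,j)}=(1-c^2)w_{(\alpha,j)}$ degenerates to $0$ since $c=\pm 1$, but this only places $v=\tau_iw_{(\alpha,j)}$ in the kernel of $\tau_i$ and does not by itself force $v=0$. To rule out $v\neq 0$, I would invoke the classification of weights occurring in an irreducible completely splittable module, i.e.\ [WaW, Theorem 5.16] together with [Ru, Theorem 2.13] (the same input underlying Lemma \ref{generate-inductions-lemma-1}): this structural result implies that $(s_i\alpha,j)$ cannot occur as a weight of such an $M$ when $j_i=j_{i+1}$ and $\alpha_{i+1}-\alpha_i=\pm 1$. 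Hence $v\in M_{(s_i\alpha,j)}=0$, and $f_iw_{(\alpha,j)}=cw_{(\alpha,j)}=\pm w_{(\alpha,j)}$ with the sign equal to the sign of $\alpha_{i+1}-\alpha_i$. This appeal to the classification is the only non-formal step; every other identity reduces to a manipulation of the defining relations.
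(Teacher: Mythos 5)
Your computations for parts $(\mathrm{i})$ and $(\mathrm{iii})$ are correct and complete: the scalar action of $e_i$ on a weight space, the companion relation $f_ix_i=x_{i+1}f_i-e_i$, the identification of the new weights, and the non-vanishing of $v$ via $\tau_i^2=1-c^2\neq 0$ all check out (note that the paper itself states this proposition without proof, deferring implicitly to [Ru] and [WaW], so there is no written argument to compare against). For the record, one should also say that $\{w_{(\alpha,j)},v\}$ is linearly independent because the two vectors have distinct $x_i$-eigenvalues $\alpha_i\neq\alpha_{i+1}$, the inequality being Lemma \ref{generate-inductions-lemma-1}; this is implicit in your write-up.

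The genuine gap is in part $(\mathrm{ii})$, exactly at the step you single out. You correctly reduce to showing that $(s_i\alpha,j)$ is not a weight of $M$ when $j_i=j_{i+1}$ and $\alpha_{i+1}=\alpha_i\pm 1$, but the justification you offer --- ``the same input underlying Lemma \ref{generate-inductions-lemma-1}'' --- does not deliver this. That input only excludes weights $(\beta,j)$ with $\beta_i=\beta_{i+1}$ and $j_i=j_{i+1}$, whereas the weight $(s_i\alpha,j)$ you need to exclude has $\beta_i=\alpha_{i+1}\neq\alpha_i=\beta_{i+1}$, so it passes that test. Nor is the obstruction local: the two-dimensional space spanned by $w_{(\alpha,j)}$ and $v=(f_i-c)w_{(\alpha,j)}$ with $c=\pm 1$ is a perfectly consistent module for the subalgebra generated by $x_i,x_{i+1},t_i,t_{i+1},f_i$, with $x_i$ and $x_{i+1}$ acting semisimply on it; so no contradiction can be extracted from the defining relations together with complete splittability alone. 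Excluding $(s_i\alpha,j)$ genuinely requires a global argument using irreducibility of $M$ over all of $D_{r,n}$ (in the characteristic-zero group-algebra setting one can argue via an invariant form forcing the local two-dimensional module to be semisimple, hence a sum of one-dimensional ones; in positive characteristic Ruff's treatment devotes a separate argument to precisely this case). Alternatively, appealing to the full description of the weight sets of irreducible completely splittable modules would suffice, but within this paper that description (Theorems \ref{generate-inductions-theorem-3} and \ref{generate-inductions-theorem-4}) is built on the present proposition, so that route must be set up carefully to avoid circularity. As written, part $(\mathrm{ii})$ is not proved.
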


First we consider the case that $p=2.$ In this case, we have $I=\{0,1\}.$ Let $\mathrm{Cont}_{2}(D_{r,n})$ be the set of all the sequences $(\alpha,j)=((\alpha_1,\ldots,\alpha_{n}),(j_1,\ldots,j_{n}))\in I^{n}\times J^{n}$ such that $\alpha_{i}\neq \alpha_{i+1}$ whenever $j_{i}=j_{i+1}.$ We call $s_{i}$ admissible if $j_{i}\neq j_{i+1}.$ We put an equivalence relation $\sim$ on $\mathrm{Cont}_{2}(D_{r,n})$ by saying that $\alpha\sim\beta$ if $\alpha$ can be obtained from $\beta$ by a sequence of admissible transpositions.
\begin{theorem}\label{generate-inductions-theorem-3}
Let $\bm{\lambda}\in \mathrm{Cont}_{2}(D_{r,n})/\sim$ be an equivalence class. Then there exists an irreducible completely splittable module $D^{\bm{\lambda}}$ whose weights are exactly the elements of $\bm{\lambda}.$ $D^{\bm{\lambda}}$ has a basis $\{v_{(\alpha,j)}\:|\:(\alpha,j)\in \bm{\lambda}\}$ with a $D_{r,n}$-action given by
\[t_{i}v_{(\alpha,j)}=\zeta_{j_{i}}v_{(\alpha,j)}\quad \text{ for }1\leq i\leq n,\]
\[x_{i}v_{(\alpha,j)}=\alpha_{i}v_{(\alpha,j)}\quad \text{ for }1\leq i\leq n,\]
If $j_{k}\neq j_{k+1}$ for some $1\leq k\leq n-1$, we have $f_{k}v_{(\alpha,j)}=v_{(s_{k}\alpha,s_{k}j)};$
if $j_{k}=j_{k+1}$, we have $f_{k}v_{(\alpha,j)}=v_{(\alpha,j)}.$ Moreover, all the irreducible completely splittable representations of $D_{r,n}$ can be obtained in this way.
\end{theorem}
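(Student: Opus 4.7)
The plan is to follow the template of [Ru, Theorem 3.4] and the wreath Hecke algebra treatment of [WaW], specialised to $D_{r,n}$ in characteristic $p=2$. The proof has three stages: (i) verify that the formulas define a $D_{r,n}$-module structure on $D^{\bm{\lambda}}$; (ii) show $D^{\bm{\lambda}}$ is irreducible and completely splittable with weight support exactly $\bm{\lambda}$; and (iii) show every irreducible completely splittable $D_{r,n}$-module arises this way. The key simplification in characteristic $2$ is that $I=\{0,1\}$, so by Lemma~\ref{generate-inductions-lemma-1} the condition $j_i=j_{i+1}$ forces $\alpha_{i+1}=\alpha_i+1$; thus case (iii) of Proposition~\ref{generate-inductions-lemma-2} cannot occur, and case (ii) reduces to $f_iv_{(\alpha,j)}=v_{(\alpha,j)}$ (since $\pm 1=1$ in $\mathbb{K}$), matching the stated formulas.

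For stage (i), the diagonal relations $t_it_j=t_jt_i$, $t_i^r=1$, $x_ix_j=x_jx_i$, and $t_jx_i=x_it_j$ follow immediately, and the relations $f_i^2=1$, $f_it_j=t_{s_i(j)}f_i$, $f_if_j=f_jf_i$ for $|i-j|\geq 2$, and $f_ix_j=x_jf_i$ for $j\neq i,i+1$ reduce to short case checks depending on whether $j_i=j_{i+1}$. The Leibniz-type relation $f_ix_{i+1}=x_if_i+e_i$ splits similarly: when $j_i\neq j_{i+1}$, $e_iv_{(\alpha,j)}=0$ and both sides yield $\alpha_{i+1}v_{(s_i\alpha,s_ij)}$; when $j_i=j_{i+1}$, $e_iv_{(\alpha,j)}=v_{(\alpha,j)}$ and the identity becomes $\alpha_{i+1}v=\alpha_iv+v$, which holds because $\alpha_{i+1}=\alpha_i+1$ in characteristic $2$.

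The main obstacle is the braid relation $f_if_{i+1}f_i=f_{i+1}f_if_{i+1}$, which I would handle by case analysis on the coincidence pattern of $(j_i,j_{i+1},j_{i+2})$. When all three are equal, both $f_i$ and $f_{i+1}$ act as the identity on the relevant weight vectors and the relation is trivial. When all three are distinct, every intermediate step is a genuine swap and the relation reduces to the symmetric group braid relation $s_is_{i+1}s_i=s_{i+1}s_is_{i+1}$ acting on $(\alpha,j)$. In the mixed patterns (e.g.\ $j_i=j_{i+1}\neq j_{i+2}$, $j_i\neq j_{i+1}=j_{i+2}$, or $j_i=j_{i+2}\neq j_{i+1}$), one tracks how the relation $j_k=j_{k+1}$ toggles on and off after each application of $f_i$ or $f_{i+1}$: in each pattern some of the six applications act as the identity while the others permute basis vectors, and a direct check shows that both triples produce the same final basis vector. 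This is the single place where the Yokonuma-type idempotents interact nontrivially with the affine braid structure.

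For stage (ii), each $(\alpha,j)$-weight space of $D^{\bm{\lambda}}$ equals $\mathbb{K}v_{(\alpha,j)}$, so the $x_i$'s act diagonalisably (complete splittability) and the weight support equals $\bm{\lambda}$. Any nonzero submodule contains some basis vector $v_{(\alpha,j)}$, and repeated application of admissible $f_k$'s (those with $j_k\neq j_{k+1}$) reaches every other basis vector in the same equivalence class, giving irreducibility. For stage (iii), given an arbitrary irreducible completely splittable $M\in\mathrm{Rep}_ID_{r,n}$, fix a weight vector $v$ of weight $(\alpha,j)$; by Lemma~\ref{generate-inductions-lemma-1} this lies in $\mathrm{Cont}_2(D_{r,n})$. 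Let $\bm{\lambda}$ be its equivalence class. Using Proposition~\ref{generate-inductions-lemma-2} to track the action of each $f_k$ on weight vectors produced by successive admissible transpositions, one builds a nonzero $D_{r,n}$-homomorphism $D^{\bm{\lambda}}\to M$; irreducibility of both sides forces it to be an isomorphism, and distinctness of $D^{\bm{\lambda}}$ for different equivalence classes is immediate from the weight supports.
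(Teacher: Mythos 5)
The paper offers no proof of this theorem at all: the entire appendix is stated without proofs, deferring implicitly to the methods of [Ru] and [WaW]. So there is nothing to compare line by line; your three-stage plan (verify the relations, prove irreducibility and complete splittability via the weight-space decomposition, then show exhaustiveness using Lemma~\ref{generate-inductions-lemma-1} and Proposition~\ref{generate-inductions-lemma-2}) is exactly the route the paper intends, and your observation that $I=\{0,1\}$ forces $\alpha_{i+1}=\alpha_i+1$ whenever $j_i=j_{i+1}$, killing case (iii) of Proposition~\ref{generate-inductions-lemma-2} and collapsing $\pm1$ to $1$, is the right key point. The diagonal relations and the check of $f_ix_{i+1}=x_if_i+e_i$ in both cases are correct as you describe them.

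There is, however, a genuine gap at the place you yourself flag as the main obstacle, namely the braid relation in the pattern $j_i=j_{i+2}\neq j_{i+1}$. Here $\mathrm{Cont}_2(D_{r,n})$, as defined by adjacent constraints only, imposes nothing on $\alpha_i$ versus $\alpha_{i+2}$. If $\alpha_i=\alpha_{i+2}$ (for instance $n=3$, $j=(1,2,1)$, $\alpha=(0,0,0)$, which satisfies every adjacent constraint), then $s_i$ is admissible but $s_i(\alpha,j)$ has an adjacent pair with equal $j$-entries and equal $\alpha$-entries; the vector $v_{(s_i\alpha,s_ij)}$ either does not lie in the proposed basis, or, if one includes it and applies the stated rule that $f_{i+1}$ acts as the identity there, the relation $f_{i+1}x_{i+2}=x_{i+1}f_{i+1}+e_{i+1}$ forces $\alpha_{i+2}v=\alpha_{i+1}v+v$ with $\alpha_{i+1}=\alpha_{i+2}$, i.e.\ $v=0$. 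So the ``direct check'' you invoke fails in this subcase, and the module structure is not well defined on such a class. To close the gap you must show that an equivalence class arising from an actual irreducible completely splittable module never contains such a configuration --- equivalently, that whenever $j_i=j_k$ with no intermediate position carrying the same $j$-value, one has $\alpha_i\neq\alpha_k$ (the $p=2$ shadow of Proposition~\ref{generate-inductions-lemma-3}) --- and restrict the construction to classes closed under admissible transpositions inside this strengthened set. This is arguably a defect of the theorem's statement of $\mathrm{Cont}_2$ as much as of your proof, but your argument as written does not detect or repair it, and the well-definedness of $D^{\bm{\lambda}}$ genuinely depends on it.
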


From now on, we always assume that $p> 2.$
\begin{definition}
We say a simple transposition $s_{i}$ is admissible with respect to the weight $(\alpha,j)$ if $j_{i}\neq j_{i+1}$ or $\alpha_{i+1}\neq \alpha_{i}\pm 1$ whenever $j_i=j_{i+1}.$
\end{definition}

Let us denote by $\mathrm{Spec}_{p}(D_{r,n})$ the set of all weights occurring in irreducible completely splittable modules of $D_{r,n}.$
\begin{proposition}\label{generate-inductions-lemma-3}
Let $(\alpha,j)\in \mathrm{Spec}_{p}(D_{r,n})$, and suppose that $\alpha_{i}=\alpha_{k}$ for some $1\leq i<k \leq n$ with $j_{i}=j_{k}.$ Then we have 
$\{\alpha_{i}+1, \alpha_{i}-1\}\subseteq \{\alpha_{i_1},\ldots,\alpha_{i_{s}}\}$, where $i+1\leq i_{1},\ldots,i_{s}\leq k-1$ are all the elements such that $j_{i_{1}}=\cdots=j_{i_{s}}=j_{i}=j_{k}.$
\end{proposition}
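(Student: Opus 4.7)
The plan is to argue by contradiction and reduce to Ruff's classification of completely splittable weights for the ordinary degenerate affine Hecke algebra. Without loss of generality I assume $\alpha_i + 1 \notin \{\alpha_{i_1}, \ldots, \alpha_{i_s}\}$ (the case of $\alpha_i - 1$ is entirely symmetric), and write $a = \alpha_i = \alpha_k$. Starting from a weight vector of weight $(\alpha,j)$ in the irreducible completely splittable module $M$, I apply Proposition~\ref{generate-inductions-lemma-2}(i) repeatedly: whenever adjacent positions carry different colors, $f_l$ acts as a nonzero swap on weight vectors. These admissible swaps realize a stable sort that preserves the relative order of same-color positions, so after finitely many of them the resulting weight $(\alpha',j') \in \mathrm{Spec}_p(D_{r,n})$ has the color-$j_i$ positions of $[i,k]$ consecutive, say at indices $m, m+1, \ldots, m+s+1$, carrying the values $a, \alpha_{i_1}, \ldots, \alpha_{i_s}, a$ in that order.

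The crux is the following reduction. Let $W \subseteq M$ be the sum of weight spaces on which $t_l$ acts by $\zeta_{j_i}$ for every $l \in [m, m+s+1]$. Because the $x$'s commute with the $t$'s, and $f_l$ merely interchanges $t_l$ with $t_{l+1}$ (which coincide on $W$), the subspace $W$ is stable under $x_m, \ldots, x_{m+s+1}$ and $f_m, \ldots, f_{m+s}$. On $W$ the operators $t_l$ and $t_{l+1}$ agree for each relevant $l$, so each idempotent $e_l$ acts as the identity; the relation $f_l x_{l+1} = x_l f_l + e_l$ therefore collapses to $f_l x_{l+1} = x_l f_l + 1$, and the remaining $D_{r,n}$-relations among these generators become the defining relations of the ordinary degenerate affine Hecke algebra $H_{s+2}$ of type $A$. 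The $x$-action on $W$ is semisimple (inherited from $M$), so $W$ is a completely splittable $H_{s+2}$-module. Picking any irreducible $H_{s+2}$-subquotient $N$ whose support contains the weight $(a, \alpha_{i_1}, \ldots, \alpha_{i_s}, a)$, Ruff's theorem [Ru, Theorem 2.13] (equivalently the trivial-group case of [WaW, Theorem 5.16]) forces both $a+1$ and $a-1$ to occur among $\alpha_{i_1}, \ldots, \alpha_{i_s}$, contradicting the assumption.

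The main obstacle is the rigour of the reduction: one must verify that every $D_{r,n}$-relation involving $f_m, \ldots, f_{m+s}$ and $x_m, \ldots, x_{m+s+1}$ degenerates correctly to the corresponding $H_{s+2}$-relation on $W$ (the nontrivial case being $f_l x_{l+1} = x_l f_l + e_l$, handled by the identity action of $e_l$ on $W$), and that the bubble-sort step produces the block values in precisely the order $a, \alpha_{i_1}, \ldots, \alpha_{i_s}, a$. Both points are essentially bookkeeping but rely on the weight-vector statements of Proposition~\ref{generate-inductions-lemma-2} being compatible with the subspace-level description of $W$ used above.
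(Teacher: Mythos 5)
Your argument is correct, and it is essentially the intended one: the paper states this proposition without proof (the appendix explicitly leans on [WaW, Theorem 5.16] and [Ru, Theorem 2.13]), and the expected justification is precisely your reduction --- use admissible transpositions $f_l$ with $j_l\neq j_{l+1}$ to gather the colour-$j_i$ entries of $[i,k]$ into a consecutive block (only ever swapping differently coloured neighbours, so the relative order $a,\alpha_{i_1},\ldots,\alpha_{i_s},a$ is preserved), then pass to the simultaneous $\zeta_{j_i}$-eigenspace of $t_m,\ldots,t_{m+s+1}$, where each $e_l$ acts as the identity and the $D_{r,n}$-relations collapse to those of the degenerate affine Hecke algebra, so that Ruff's constraint on completely splittable weights applies. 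Your two flagged bookkeeping points (the degeneration of $f_lx_{l+1}=x_lf_l+e_l$ and the stable-sort order of the block) both check out, so the proof is complete.
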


Let $(\alpha,j)\in I^{n}\times J^{n}$ be such that, whenever $\alpha_{i}=\alpha_{k}$ for some $1\leq i<k \leq n$ with $j_{i}=j_{k},$ we have $\{\alpha_{i}+1, \alpha_{i}-1\}\subseteq \{\alpha_{i_1},\ldots,\alpha_{i_{s}}\}$, where $i+1\leq i_{1},\ldots,i_{s}\leq k-1$ are all the elements such that $j_{i_{1}}=\cdots=j_{i_{s}}=j_{i}=j_{k},$ then we call $(\alpha,j)$ a content vector of length $n.$ We write $\mathrm{Cont}_{p}(D_{r,n})$ for the set of all such vectors, and put an equivalence relation $\sim$ on $\mathrm{Cont}_{p}(D_{r,n})$ by saying that $\alpha\sim\beta$ if $\alpha$ can be obtained from $\beta$ by a sequence of admissible transpositions.
\begin{theorem}\label{generate-inductions-theorem-4}
Let $\bm{\lambda}\in \mathrm{Cont}_{p}(D_{r,n})/\sim$ be an equivalence class. Then there exists an irreducible completely splittable module $D^{\bm{\lambda}}$ whose weights are exactly the elements of $\bm{\lambda}.$ $D^{\bm{\lambda}}$ has a basis $\{v_{(\alpha,j)}\:|\:(\alpha,j)\in \bm{\lambda}\}$ with a $D_{r,n}$-action given by
\[t_{i}v_{(\alpha,j)}=\zeta_{j_{i}}v_{(\alpha,j)}\quad \text{ for }1\leq i\leq n,\]
\[x_{i}v_{(\alpha,j)}=\alpha_{i}v_{(\alpha,j)}\quad \text{ for }1\leq i\leq n,\]
If $j_{k}\neq j_{k+1}$ for some $1\leq k\leq n-1$, we have $f_{k}v_{(\alpha,j)}=v_{(s_{k}\alpha,s_{k}j)};$
if $j_{k}=j_{k+1}$, we have \[f_{k}v_{(\alpha,j)}=(\alpha_{k+1}-\alpha_{k})^{-1}v_{(\alpha,j)}+\sqrt{1-(\alpha_{k+1}-\alpha_{k})^{-2}}v_{(s_{i}\alpha,j)},\]
where we take some fixed choice of square roots in $\mathbb{K}.$ Moreover, all the irreducible completely splittable representations of $D_{r,n}$ can be obtained in this way. 
\end{theorem}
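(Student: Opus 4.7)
The plan is to follow the three-step pattern already used for Theorem 5.4: first check that the formulas define a $D_{r,n}$-module structure on $D^{\bm{\lambda}}$, then verify that this module is irreducible, and finally show any irreducible completely splittable module is isomorphic to one of the $D^{\bm{\lambda}}$.

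For the first step, most relations are immediate: $t_i,t_j$ and $x_i,x_j$ commute because they act diagonally, $t_i^r=1$ holds because $\zeta_{j_i}^r=1$, and $f_i t_j = t_{s_i(j)} f_i$ follows by direct inspection in the two cases $j_i=j_{i+1}$ and $j_i\neq j_{i+1}$. The quadratic relation $f_i^2=1$ in the color-swap case is clear; in the same-color case, the matrix from Proposition \ref{generate-inductions-lemma-2}(iii) squares to $1$ because $(\alpha_{i+1}-\alpha_i)^{-2}+\bigl(1-(\alpha_{i+1}-\alpha_i)^{-2}\bigr)=1$ and the sign choices for the square roots associated to the ordered pair $(\alpha,s_i\alpha)$ and to $(s_i\alpha,\alpha)$ can be fixed compatibly once and for all within the equivalence class. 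The relation $f_i x_{i+1}=x_i f_i+e_i$ needs a case split: when $j_i\neq j_{i+1}$ we have $e_i w_{(\alpha,j)}=0$ (since $t_i t_{i+1}^{-1}$ acts as a nontrivial root of unity, so the idempotent $e_i$ vanishes), and the identity reduces to $f_i x_{i+1}=x_i f_i$; when $j_i=j_{i+1}$ we have $e_i=1$ on the weight space, and the identity becomes the standard degenerate affine Hecke algebra relation, which is built into the matrix formula.

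For the braid relation $f_i f_{i+1} f_i=f_{i+1} f_i f_{i+1}$ I would do a case analysis on the pattern $(j_i,j_{i+1},j_{i+2})$. The case of three equal colors reduces to Ruff's verification for the degenerate affine Hecke algebra on three strands, using that $\mathrm{Cont}_p(D_{r,n})$ imposes Ruff's content-vector condition on each monochromatic subsequence (Proposition \ref{generate-inductions-lemma-3}). The cases of three distinct colors or exactly two equal colors are verified by direct computation using the two action formulas; in the mixed cases where one braid step involves a square root, the relevant product of square roots is of the form $\sqrt{1-(\alpha_a-\alpha_b)^{-2}}$ on both sides, so the equality holds once the global choice of square roots is made consistent across the orbit. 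For irreducibility of $D^{\bm{\lambda}}$, note first that the joint $(x_1,\ldots,x_n,t_1,\ldots,t_n)$-spectrum separates the basis $\{v_{(\alpha,j)}\}$ (distinct members of $\bm{\lambda}$ are distinct as content/color sequences), so Lagrange interpolation in the commuting diagonal operators projects any nonzero element onto some $v_{(\alpha,j)}$; then by definition of the equivalence relation any other basis vector is reached from $v_{(\alpha,j)}$ by a sequence of admissible $f_k$'s, each of which either permutes the two weight vectors up to a nonzero scalar (case $j_k\neq j_{k+1}$) or is an invertible $2\times 2$ matrix (case $j_k=j_{k+1},\ \alpha_{k+1}\neq\alpha_k\pm 1$), so the cyclic submodule is the whole space.

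For the converse, let $M$ be irreducible and completely splittable; pick any weight $(\alpha,j)\in\mathrm{supp}(M)$. By Lemma \ref{generate-inductions-lemma-1} together with the $p>2$ refinement in Proposition \ref{generate-inductions-lemma-3}, one has $(\alpha,j)\in\mathrm{Cont}_p(D_{r,n})$. Applying admissible simple reflections via Proposition \ref{generate-inductions-lemma-2} produces, from a nonzero $w_{(\alpha,j)}$, nonzero weight vectors at every element of the equivalence class $\bm{\lambda}$ of $(\alpha,j)$, and conversely no weight outside $\bm{\lambda}$ can occur (else admissibility would move it into $\bm{\lambda}$, giving a weight equal to $(\alpha,j)$ with multiplicity forcing a non-calibrated situation that the completely splittable hypothesis forbids). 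Normalizing the weight vectors so that the admissible $f_k$ actions match the formulas of Proposition \ref{generate-inductions-lemma-2}(i) and (iii) (with the same global square root convention used to build $D^{\bm{\lambda}}$) yields an isomorphism $M\cong D^{\bm{\lambda}}$.

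The main obstacle will be verifying the braid relation in the mixed-color cases and, relatedly, making the global choice of square roots coherent on the whole orbit $\bm{\lambda}$. Once one observes that admissible transpositions of the same-color type generate a groupoid on the orbit whose $2$-cells are exactly the quadratic and braid identities, the consistency of signs reduces to checking that each $2$-cell is closed under the chosen square-root convention, which is the same calculation Ruff performs in [Ru, Theorem 3.4] carried out block-by-block on the maximal monochromatic subsequences of $(j_1,\ldots,j_n)$.
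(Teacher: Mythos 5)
The paper itself gives no proof of this theorem: the entire appendix is stated without arguments, on the understanding that everything follows the scheme of [Ru] (and of Theorem \ref{classifi-maintheorem} in Section 3), and your three-step outline --- verify the relations case-by-case on the colour pattern, project onto a single weight vector by interpolation in the commuting family $x_1,\ldots,x_n,t_1,\ldots,t_n$ and connect basis vectors by admissible transpositions, then run the argument backwards for completeness --- is exactly that intended scheme, including the correct identification of the square-root coherence and the mixed-colour braid relation as the only points needing real care. The one soft spot is your parenthetical justification that no weight outside $\bm{\lambda}$ occurs (``forcing a non-calibrated situation''): the clean argument is simply that the span of the weight vectors indexed by $\bm{\lambda}$ is stable under all generators by Proposition \ref{generate-inductions-lemma-2} (non-admissible same-colour $f_k$ acting by $\pm1$), hence is a nonzero submodule and equals $M$ by irreducibility, so $\mathrm{supp}(M)\subseteq\bm{\lambda}$.
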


We now apply the results obtained above to the modular representation theory of the wreath product $(\mathbb{Z}/r\mathbb{Z})\wr \mathfrak{S}_{n}.$ Recall that $(\mathbb{Z}/r\mathbb{Z})\wr \mathfrak{S}_{n},$ which is isomorphic to the reflection group $W_{r,n}$ of type $G(r,1,n),$ is generated by elements $t_1,\ldots,t_{n},g_{1},\ldots,g_{n-1}$ with relations:
\begin{equation*}\begin{array}{rclcl}
t_{j}^{r}\hspace*{-7pt}&=&\hspace*{-7pt}1\text{ for }1\leq j\leq n,\qquad \qquad\qquad\qquad \qquad\qquad\quad s_{i}^{2}\hspace*{-7pt}&=&\hspace*{-7pt}1 \text{ for }1\leq i\leq n-1;\\[0.1em]
t_{i}t_{j}\hspace*{-7pt}&=&\hspace*{-7pt}t_{j}t_{i}\text{ for }1\leq i,j\leq n,\qquad \qquad \qquad\qquad\quad s_{i}s_{i+1}s_{i}\hspace*{-7pt}&=&\hspace*{-7pt}s_{i+1}s_{i}s_{i+1} \text{ for }1\leq i\leq n-1;\\[0.1em]
g_{i}t_{j}\hspace*{-7pt}&=&\hspace*{-7pt}t_{s_{i}j}g_{i}\text{ for }1\leq i\leq n-1 \text{ and }1\leq j\leq n,\qquad s_{i}s_{j}\hspace*{-7pt}&=&\hspace*{-7pt}s_{j}s_{i} \text{ if }|i-j|>1.
\end{array}
\end{equation*}

We regard the group algebra $\mathbb{K}W_{r,n}$ of $W_{r,n}$ as the quotient of $D_{r,n}$ obtained by letting $x_{1}=0.$ Thus, for $i\geq 1,$ $x_{i}$ is sent to the $i$-th Jucys-Murphy element $\xi_{k}$ defined by 
\[\xi_{k} :=\sum_{i=1}^{k-1}\sum_{s=0}^{r-1}t_{i}^{s}t_{k}^{-s}(i,k).\]
\begin{definition}
A $\mathbb{K}W_{r,n}$-module $M$ is called completely splittable if the Jucys-Murphy elements $\xi_{k}$'s act semisimply on it.
\end{definition}

\begin{definition}
We define $\mathrm{Spec}_{p}(W_{r,n})\subset\mathrm{Spec}_{p}(D_{r,n})$ to be a set consisting of weights $(\alpha, j)\in \mathrm{Spec}_{p}(D_{r,n})$ which satisfy the additional conditions:

$(1)$ $\alpha_{i_{1}^{p}}=0$ for $1\leq p\leq r,$ where $i_{1}^{p}< i_{2}^{p}<\cdots< i_{s_{p}}^{p}$ are all the elements such that $j_{i_{1}^{p}}=j_{i_{2}^{p}}=\cdots=j_{i_{s_{p}}^{p}}=p.$

$(2)$ For each $i=2,\ldots,n,$ $\{\alpha_{i}+1, \alpha_{i}-1\}\cap \{\alpha_{i_1},\ldots,\alpha_{i_{s}}\}\neq \varnothing$, where $i_1,\ldots,i_{s}\in \{1,\ldots,i-1\}$ are all the elements such that $j_{i_{1}}=\cdots=j_{i_{s}}=j_{i}.$
\end{definition}

\begin{lemma}\label{generate-inductions-lemma-5}
$\mathrm{Spec}_{p}(W_{r,n})$ is exactly the set of weights occurring in irreducible completely spplittable $\mathbb{K}W_{r,n}$-modules.
\end{lemma}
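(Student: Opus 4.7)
The plan is to establish both inclusions, paralleling [Ru, Theorem 3.10] for the symmetric group.

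For the direction that every weight of an irreducible completely splittable $\mathbb{K}W_{r,n}$-module lies in $\mathrm{Spec}_p(W_{r,n})$: I would take such a module $M$ and, via the surjection $D_{r,n} \twoheadrightarrow \mathbb{K}W_{r,n}$ (which sends $x_i \mapsto \xi_i$, $f_i \mapsto s_i$, and has kernel generated by $x_1$), view $M$ as a completely splittable $D_{r,n}$-module whose weights lie in $\mathrm{Spec}_p(D_{r,n})$ by Theorem \ref{generate-inductions-theorem-4}. Condition (1) is then immediate: for any weight $(\alpha, j)$ and any color $p$ whose first occurrence in $j$ is at position $k$, the sum $\xi_k = \sum_{i<k,\, j_i = j_k} \cdots$ is empty on the weight space and so annihilates the weight vector, forcing $\alpha_k = 0$. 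For condition (2) I would apply admissible transpositions (Proposition \ref{generate-inductions-lemma-2}(i)) to reduce to the case where the last same-color index $i_s$ sits at position $i-1$; then the semisimple action of $\xi_i$ combined with Proposition \ref{generate-inductions-lemma-2}(iii) and the analysis of [Ru, Lemma 3.8] forces $\alpha_{i_s} \in \{\alpha_i + 1,\, \alpha_i - 1\}$, which after translating back via admissible transpositions yields the required intersection.

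For the reverse direction, I would argue by construction. Given $(\alpha, j) \in \mathrm{Spec}_p(W_{r,n})$ with $\sim$-equivalence class $\bm{\lambda}$, the inclusion $\mathrm{Spec}_p(W_{r,n}) \subseteq \mathrm{Spec}_p(D_{r,n})$ places $\bm{\lambda} \subseteq \mathrm{Cont}_p(D_{r,n})$, so Theorem \ref{generate-inductions-theorem-4} produces an irreducible completely splittable $D_{r,n}$-module $D^{\bm{\lambda}}$ with weight set $\bm{\lambda}$. Condition (1), being $\sim$-invariant, ensures that every weight $(\alpha', j') \in \bm{\lambda}$ satisfies $\alpha'_1 = \alpha'_{i_1^{j'_1}} = 0$ (since position $1$ is the first occurrence of color $j'_1$), so $x_1$ annihilates every weight vector and acts as zero on $D^{\bm{\lambda}}$. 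Consequently $D^{\bm{\lambda}}$ descends through $D_{r,n}/(x_1) \cong \mathbb{K}W_{r,n}$ to an irreducible completely splittable $\mathbb{K}W_{r,n}$-module realizing $(\alpha, j)$ as a weight.

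The main obstacle is the necessity of condition (2). It does not follow directly from the $D_{r,n}$-classification, since $\mathrm{Cont}_p(D_{r,n})$ only constrains repetitions of identical values within a color class, not the adjacent-content relation at every index with a same-color predecessor. Overcoming this requires exploiting the specific form $\xi_i = r\sum_{k<i,\, j_k=j_i}(k,i)$: after admissibly reducing to the configuration in which $i_s$ sits immediately to the left of $i$, Proposition \ref{generate-inductions-lemma-2}(iii) together with the semisimplicity of $\xi_i$ on the two-dimensional weight space forces $\alpha_i \in \{\alpha_{i_s} \pm 1\}$, and bootstrapping through successive admissible reductions yields condition (2) in full generality.
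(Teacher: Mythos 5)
The paper states this lemma without proof (the whole appendix is presented as a transcription of [Ru, \S 3] to the wreath-product setting), so your proposal can only be judged on its own terms. Your overall skeleton is the right one and matches Ruff's: pull an irreducible completely splittable $\mathbb{K}W_{r,n}$-module back to $D_{r,n}$ to get condition on $\mathrm{Spec}_{p}(D_{r,n})$, use the vanishing of $\xi_{k}$ at the first occurrence of each colour for condition (1), and for sufficiency kill $x_{1}$ on $D^{\bm{\lambda}}$ and descend through $D_{r,n}/(x_{1})\cong \mathbb{K}W_{r,n}$. The condition (1) argument is correct as written.

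The genuine gap is in your argument for the necessity of condition (2). The mechanism you describe --- move the \emph{last} same-colour index $i_{s}$ next to position $i$ and let ``the semisimple action of $\xi_{i}$ combined with Proposition \ref{generate-inductions-lemma-2}(iii)'' force $\alpha_{i_{s}}\in\{\alpha_{i}\pm 1\}$ --- does not work. Proposition \ref{generate-inductions-lemma-2}(iii) exhibits a perfectly consistent, semisimple two-dimensional action precisely when $\alpha_{i+1}\neq\alpha_{i}\pm1$, so semisimplicity of $\xi_{i}$ forces nothing in that configuration; and the specific claim about $i_{s}$ is false even for $r=1$ (take the row-reading tableau of shape $(3,1)$, with contents $0,1,2,-1$: at $i=4$ the last earlier index has content $2\notin\{0,-2\}$, yet the weight is perfectly splittable). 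The correct argument is by contradiction: if no earlier same-colour content lies in $\{\alpha_{i}+1,\alpha_{i}-1\}$, then by Proposition \ref{generate-inductions-lemma-3} no earlier same-colour content equals $\alpha_{i}$ either, so every transposition needed to slide box $i$ leftward past all of positions $i-1,\dots,1$ is admissible; the resulting weight has $\alpha_{i}$ at position $1$, which is a first occurrence, so your condition (1) argument gives $\alpha_{i}=0$. Since the original position $i$ is not a first occurrence of its colour, the first occurrence $i_{1}^{j_{i}}$ also has content $0$, and Proposition \ref{generate-inductions-lemma-3} then forces both $+1$ and $-1$ to appear among the intermediate same-colour contents --- contradicting the assumption. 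A smaller gap in the sufficiency direction: you assert that condition (1) is ``$\sim$-invariant,'' but on its own it is not; an admissible same-colour swap at a first-occurrence position would destroy it. One must check that conditions (1) and (2) \emph{jointly} propagate along admissible transpositions (condition (2) at the second occurrence of a colour is exactly what makes the dangerous swap inadmissible), and only then does $\alpha'_{1}=0$ hold for every weight in $\bm{\lambda}$, so that $x_{1}$ annihilates all of $D^{\bm{\lambda}}$.
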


Recall that if $\lambda=(\lambda_{1}\geq\lambda_{2}\geq\cdots\geq \lambda_{k}> 0)$ is a $p$-regular partition of $n,$ we say that $\lambda$ is a splittable shape if $\chi(\lambda)\leq p,$ where $\chi(\lambda)=\lambda_{1}-\lambda_{r}+r.$ An $r$-splittable shape $\bm{\lambda}$ is an $r$-partition of $n$ such that each $\lambda^{(i)}$ is a splittable shape. If $\bm{\lambda}$ is an $r$-splittable shape, we call a standard $\bm{\lambda}$-tableau $\mathfrak{t}$ $p$-standard if for any two of its entries $((i,j),k)$ and $((i',j'),k')$ with $k=k',$ $i> i',$ $j< j',$ and $i-i'+j'-j+1=p,$ we have $\mathfrak{t}((i',j'),k')>\mathfrak{t}((i,j),k),$ where $\mathfrak{t}((i,j),k)$ denotes the entry of $\mathfrak{t}$ in the $i$-th row and $j$-th column of $\mathfrak{t}^{(k)}.$ We have the following result.

\begin{theorem}\label{generate-inductions-theorem-6}
The simple completely splittable $\mathbb{K}W_{r,n}$-modules are indexed by $r$-splittable shapes. The module $D^{\bm{\lambda}}$ corresponding to such a $\bm{\lambda}$ has a basis $\{v_{\mathfrak{t}}\}$ indexed by $p$-standard tableaux of shape $\bm{\lambda}.$ For each nonzero weight vector $v_{\mathfrak{t}}$, we have
\[t_{j}v_{\mathfrak{t}}=\zeta_{\mathrm{p}_{j}}v_{\mathfrak{t}}\quad\text{ for each }1\leq j\leq n;\]

\[g_{i}v_{\mathfrak{t}}=v_{s_{i}\mathfrak{t}}\quad\text{ for each }1\leq i\leq n-1\qquad\text{ if }\mathrm{p}_{i}\neq \mathrm{p}_{i+1};\]

\[g_{i}v_{\mathfrak{t}}=
(\mathrm{c}_{i+1}-\mathrm{c}_{i})^{-1}v_{\mathfrak{t}}+\sqrt{1-(\mathrm{c}_{i+1}-\mathrm{c}_{i})^{-2}}v_{s_{i}\mathfrak{t}}\qquad\text{ if }\mathrm{p}_{i}=\mathrm{p}_{i+1},\]
where $\mathrm{p}_{i}$ is the position of the box containing the number $i$ and $\mathrm{c}_{i}$ is the content of the box containing $i.$
\end{theorem}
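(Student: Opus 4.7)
The plan is to realize $\mathbb{K}W_{r,n}$ as the quotient $D_{r,n}/(x_1)$, under which $x_i$ becomes the Jucys--Murphy element $\xi_i$ (by induction using $x_{i+1} = f_i x_i f_i + f_i e_i$) and $f_i$ becomes $g_i$. Any completely splittable $\mathbb{K}W_{r,n}$-module inflates to a completely splittable $D_{r,n}$-module on which $x_1$ acts as zero, so Theorem \ref{generate-inductions-theorem-4} classifies such modules by the equivalence classes in $\mathrm{Cont}_p(D_{r,n})/\sim$ whose representatives satisfy $\alpha_1 = 0$. By Lemma \ref{generate-inductions-lemma-5}, this is exactly the set of equivalence classes contained in $\mathrm{Spec}_p(W_{r,n})$.

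The combinatorial heart of the argument is to identify $\mathrm{Spec}_p(W_{r,n})/\sim$ with the set of $r$-splittable shapes, with each equivalence class indexed by the $p$-standard tableaux of the associated shape. Given an $r$-splittable $\bm{\lambda}$ and a $p$-standard $\bm{\lambda}$-tableau $\mathfrak{t}$, I would assign the weight $(\alpha(\mathfrak{t}), j(\mathfrak{t}))$ with $\alpha_i = \mathrm{c}_i$ and $j_i = \mathrm{p}_i$, and then verify in turn: (i) that $(\alpha(\mathfrak{t}), j(\mathfrak{t}))$ lies in $\mathrm{Spec}_p(W_{r,n})$, because the smallest entry in each color class sits at the northwest corner of its component (giving the required zeros $\alpha_{i_1^p} = 0$), while every later entry lies adjacent in its component to a previously-placed box whose content differs by $\pm 1$; (ii) that the $p$-standardness of $\mathfrak{t}$ together with the bound $\chi(\lambda^{(k)}) \leq p$ is exactly what forces the sequence into $\mathrm{Cont}_p(D_{r,n})$, i.e., every repeat $\alpha_i = \alpha_k$ with $j_i = j_k$ is separated by same-color boxes carrying both $\alpha_i + 1$ and $\alpha_i - 1$; (iii) admissible transpositions $s_i$ on weights correspond exactly to swaps of entries $i, i+1$ in $\mathfrak{t}$ that are either across components or on non-adjacent diagonals of the same component, and such swaps act transitively on the set of $p$-standard $\bm{\lambda}$-tableaux while preserving $\bm{\lambda}$.

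Granting the bijection, the action formulas are the direct specialization of those in Theorem \ref{generate-inductions-theorem-4} under the substitutions $f_i \mapsto g_i$ and $x_i \mapsto \xi_i$, with $\alpha_i = \mathrm{c}_i$ and $\zeta_{j_i} = \zeta_{\mathrm{p}_i}$. The main obstacle is step (iii) together with the converse direction: given a weight $(\alpha, j) \in \mathrm{Spec}_p(W_{r,n})$, one must reconstruct both an $r$-splittable $\bm{\lambda}$ and a $p$-standard $\bm{\lambda}$-tableau realizing it. I would proceed by induction on $n$ in the style of Ruff's argument in [Ru] for $\mathbb{K}\mathfrak{S}_n$, where the bound $\chi(\lambda) \leq p$ emerges as the obstruction to placing a new box consistently with the content-vector condition. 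The extra color data $(j_i)$ splits the analysis into $r$ essentially independent threads, since admissible swaps across distinct colors are automatic and leave the relevant monocoloured sub-tableaux unchanged; within each color, Ruff's analysis applies verbatim to produce the component $\lambda^{(k)}$ and its $p$-standard labelling.
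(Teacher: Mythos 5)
The paper states this theorem without any proof at all: the entire appendix is presented as a list of statements ``inspired by [Ru],'' and Theorem \ref{generate-inductions-theorem-6} in particular is followed immediately by the acknowledgements. So there is no argument of the author's to compare yours against; what you have written is a reconstruction of the proof the paper implicitly defers to, and in outline it is the right one. Your three-stage plan --- (1) inflate along the surjection $D_{r,n}\twoheadrightarrow \mathbb{K}W_{r,n}$ sending $x_i\mapsto\xi_i$, $f_i\mapsto g_i$, so that Theorem \ref{generate-inductions-theorem-4} and Lemma \ref{generate-inductions-lemma-5} reduce the classification to identifying $\mathrm{Spec}_p(W_{r,n})/\!\sim$; (2) match equivalence classes with $r$-splittable shapes and weights with $p$-standard tableaux; (3) read off the action formulas by specializing $\alpha_i=\mathrm{c}_i$, $\zeta_{j_i}=\zeta_{\mathrm{p}_i}$ --- is exactly the intended route, and your verifications (i) and (ii) (first box of each colour at the northwest corner with content $0$; every later box adjacent in its component to an earlier one of content $\pm1$) are correct for straight, non-skew shapes. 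The key structural observation that makes the multicolour case reduce to Ruff's is also correctly identified: transpositions across distinct colours are always admissible and preserve the relative order within each colour class, so every equivalence class can be colour-sorted and the problem decouples into $r$ independent monochromatic problems.

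The one place where your sketch carries essentially all of the remaining weight is step (iii) together with the converse reconstruction, and you are right to flag it as the main obstacle. Two points deserve explicit care if you write this out. First, the connectivity claim is not that admissible swaps act transitively on $p$-standard tableaux in the naive sense, but that the set of weights attached to all $p$-standard tableaux of a fixed $\bm{\lambda}$ forms a single $\sim$-class; the colour-sorting reduction handles the interleavings, but you still need Ruff's monochromatic connectivity lemma (any two $p$-standard tableaux of one splittable shape are linked by admissible transpositions), which is where $p$-regularity and the bound $\chi(\lambda^{(k)})\le p$ actually enter. Second, in the action formula for $\mathrm{p}_i=\mathrm{p}_{i+1}$ you should note that when $s_i\mathfrak{t}$ fails to be standard (boxes adjacent in a row or column, so $\mathrm{c}_{i+1}-\mathrm{c}_i=\pm1$) or fails to be $p$-standard, the coefficient $\sqrt{1-(\mathrm{c}_{i+1}-\mathrm{c}_i)^{-2}}$ vanishes, so the formula is consistent with Proposition \ref{generate-inductions-lemma-2}(ii); this consistency check is what makes the specialization in step (3) legitimate. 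With those two points spelled out, your proposal is a complete and correct proof strategy, and it is more than the paper itself provides.
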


\noindent{\bf Acknowledgements.}
The author was partially supported by the National Natural Science Foundation of China (No. 11601273).



\end{document}